\documentclass[11pt]{amsart}

\usepackage[utf8]{inputenc}
\usepackage{verbatim, amssymb,hyperref,float,accents,color,soul,bbm,graphicx,amsmath,tikz,adjustbox,enumerate}
\usepackage[sort,capitalise]{cleveref} 

\crefname{figure}{Figure}{Figures}
\crefname{subsection}{Subsection}{Subsections}
\crefname{enumi}{item}{items}
\crefname{equation}{}{}

\usetikzlibrary{matrix,chains,positioning,decorations.pathreplacing,arrows,shapes,math,arrows.meta}
\tikzset{font={\fontsize{9pt}{12} \selectfont}}

\setlength{\textwidth}{16.0cm}
\setlength{\textheight}{22.5cm}
\setlength{\hoffset}{-2cm}
\setlength{\voffset}{-0.4cm}
\addtolength{\headheight}{3.5pt}
\frenchspacing

\newcommand{\red}{\color[rgb]{1,0,0}}
\newcommand{\blue}{\color[rgb]{0,0,1}}

\newcommand{\fd}{\mathfrak{d}}
\renewcommand{\d}{\mathrm{d}}

\newcommand{\eps}{\varepsilon}


\newcommand{\cD}{\mathcal D}
\newcommand{\cE}{\mathcal E}

\newcommand{\cH}{\mathcal H}

\newcommand{\cR}{\mathcal R}

\newcommand{\cA}{\mathcal A}
\newcommand{\cO}{\mathcal O}

\newcommand{\cL}{\mathcal L}

\newcommand{\supp}{\operatorname{supp}}

\newcommand{\dd}{\mathrm{d}}
\newcommand{\sfrac}[2]{\mbox{$\frac{#1}{#2}$}}

\newcommand{\IW}{\mathbb W}

\newcommand{\cW}{\mathcal W}

\newcommand{\vecn}{\mathfrak{n}}

\newcommand{\IC}{\mathbb C}
\newcommand{\ID}{\mathbb D}
\newcommand{\II}{\mathbb I}

\renewcommand{\IJ}{\mathbb J}


\newcommand{\1}{1\hspace{-0.098cm}\mathrm{l}}

\newcommand{\N}{{\mathbb N}}
\newcommand{\IA}{{\mathbb A}}

\newcommand{\R}{{\mathbb R}}

\newcommand{\err}{\mathrm{err}}

\setcounter{secnumdepth}{2}

\theoremstyle{plain}
\newtheorem{theorem}{Theorem}[section]
\newtheorem{prop}[theorem]{Proposition}
\newtheorem{lemma}[theorem]{Lemma}

\newtheorem{defi}[theorem]{Definition}

\newtheorem{exa}[theorem]{Example}
\newtheorem{rem}[theorem]{Remark}


\theoremstyle{definition}

\makeatletter
\@namedef{subjclassname@2020}{%
	\textup{2020} Mathematics Subject Classification}
\makeatother

\begin{document}

\title[Existence of optimal residual ReLU neural networks]
{On the existence of minimizers in shallow residual\\ ReLU neural network optimization landscapes}

\author[]
{Steffen Dereich}
\address{Steffen Dereich\\
	Institute for Mathematical Stochastics\\
	Faculty of Mathematics and Computer Science\\
	University of M\"{u}nster, Germany}
\email{steffen.dereich@uni-muenster.de}

\author[]
{Arnulf Jentzen}
\address{Arnulf Jentzen\\
    School of Data Science and Shenzhen Research Institute of Big Data\\
    The Chinese University of Hong Kong, Shenzhen (CUHK-Shenzhen), China; Applied Mathematics: Institute for Analysis and Numerics\\
    Faculty of Mathematics and Computer Science\\
    University of M\"{u}nster, Germany}
\email{ajentzen@cuhk.edu.cn; ajentzen@uni-muenster.de}

\author[]
{Sebastian Kassing}
\address{Sebastian Kassing\\
	Faculty of Mathematics\\
	University of Bielefeld, Germany}
\email{skassing@math.uni-bielefeld.de}

\keywords{Neural networks, shallow networks, best approximation, ReLU activation, approximatively compact}
\subjclass[2020]{Primary 68T07; Secondary 68T05, 41A50}

\begin{abstract}
In this article, we show existence of minimizers in the loss landscape for residual artificial neural networks (ANNs) with multi-dimensional input layer 
and one hidden layer with  ReLU activation. Our work contrasts earlier results in~\cite{gallonjentzenlindner2022blowup} and~\cite{MR4243432} which showed that in many situations minimizers do not exist for common smooth activation functions even in the case where the target functions are polynomials. 
The proof of the existence property makes use of a closure of the search space containing all functions generated by ANNs and additional discontinuous \emph{generalized responses}. As we will show, the additional generalized responses in this larger space are suboptimal so that the minimum is attained in the original function class.  
\end{abstract}

\maketitle
\section{Introduction}
Machine learning methods -- 
often consisting of \emph{artificial neural networks} (ANNs) 
trained through \emph{gradient descent} (GD) type optimization methods 
-- are nowadays omnipresent computational methods 
which are heavily employed in many industrial applications 
as well as scientific research activities. 
Despite the mind-blowing success of such computational schemes, 
in general, it remains an open problem of research to 
rigorously prove (or disprove) the convergence of 
GD optimization methods in the training of ANNs. 
Even in the situation of shallow ANNs 
with just one hidden layer 
it remains an open research question 
whether GD methods do converge to a stationary point in the training of such ANNs.

In the literature regarding the training of ANNs there are, however, several partial error analysis results for GD type optimization methods 
(by which we mean, everything, time-continuous gradient flow processes, deterministic GD optimization methods, 
as well as stochastic GD optimization methods), see Section~\ref{sec:literature}. Many convergence results assume that the considered GD type optimization process is (almost surely) bounded, 
loosely speaking, in the sense that 
\begin{equation}
\label{eq:a_priori_bound}
\textstyle 
    \sup_{ t \in [0,\infty) }
    \| \Theta_t \|
    < \infty 
\end{equation}
where $ \Theta \colon [0,\infty) \to \R^{ \fd } $ 
corresponds to the employed GD type optimization process
(which could be a gradient flow optimization process 
or a time-continuous version of a discrete GD type optimization process), 
where $ \fd \in \N $ 
corresponds to the number of trainable parameters, 
and where $ \left\| \cdot \right\| $ refers to the standard norm on $ \R^{ \fd } $.

In general, it remains an open problem to verify \cref{eq:a_priori_bound} 
(and, thus, whether the many achievements concerning the convergence of GD are actually applicable in context of the training 
of ANNs).  
The question whether \cref{eq:a_priori_bound} is satisfied seems to be closely 
related to \emph{the existence of minimizers in the optimization landscape}; cf.\ \cite{gallonjentzenlindner2022blowup,MR4243432}. 
In particular, in \cite{gallonjentzenlindner2022blowup} 
counterexamples to \cref{eq:a_priori_bound} are given 
and \emph{divergence} of GD type optimization processes is proved  
in the sense that 
\begin{equation}
\label{eq:divergence}
  \liminf_{ t \to \infty }
  \| \Theta_t \|
  = \infty 
\end{equation} 
in certain cases where there do not exist minimizers in the optimization landscape. 
A divergence phenomenon of the form \cref{eq:divergence} 
may \emph{slow down} (or even completely rule out) the convergence of the error function, 
which is the highly relevant quantity in practical applications. In this aspect, it seems to be strongly advisable to 
\emph{design the ANN architecture} in a way so that there 
exist minimizers in the optimization landscape 
and so that the divergence in \cref{eq:divergence} \emph{fails to happen}. 

\emph{Overparametrized networks} in the setting of empirical risk minimization 
(more ReLU neurons than data points to fit) are able to perfectly interpolate the data (cf. \cite[Lemma~27.3]{foucart2022mathematical}) such that 
there exists a network configuration achieving zero error and, thus, a global minimum 
in the search space. 
For shallow feedforward ANNs using ReLU activation it has been shown that also in the underparametrized regime there exists a global minimum if the ANN has a one-dimensional output~\cite{le2023does}, whereas there are pathological counterexamples in higher dimensions~\cite{lim2022best}.

However, for general measures $ \mu $ not necessarily consisting of a finite number of Dirac measures 
the literature on the existence of global minima is very limited. The question whether there exist minimizers in the optimization landscape seems to be closely related to the choice of the \emph{activation function} 
and the specific architecture of the ANN. 
Indeed, in the case of \emph{fully-connected feedforward ANNs 
with just one hidden layer and one-dimensional input and output layer}, 
on the one hand 
for several common (smooth) activations 
such as 
\begin{itemize}
\item
the 
\emph{standard logistic activation}, 
\item 
the \emph{softplus activation}, 
\item 
the \emph{inverse tangent (arctan) activation}, 
\item 
the \emph{hyperbolic tangent activation}, 
and 
\item 
the \emph{softsign activation}
\end{itemize}
it has been shown 
(cf.\ 
\cite[Theorems~1.3 and 1.4]{gallonjentzenlindner2022blowup}
and 
\cite[Section~1.2]{MR4243432}) 
that, in general, there \emph{do not exist minimizers in the optimization landscape} 
even if the class of considered target functions 
is restricted to smooth functions or even polynomials 
but, on the other hand for the \emph{ReLU activation},  
it has been proved 
in \cite[Theorem~1.1]{jentzen2021existence} 
that for every Lipschitz continuous target function 
there \emph{do exist minimizers in the optimization landscape}. 
 These existence/non-existence phenomena for minimizers 
	in the optimization landscape thus reveal a \emph{fundamental difference}
	of the ReLU activation compared to the above mentioned 
	smooth activations. 
	This might give a partial explanation for why the ReLU activation seems to outperform other smooth activations in many regression tasks using a convex loss function, even though 
	it fails to be differentiable in contrast to the above mentioned 
	continuously differentiable activation functions.

Theorem~1.1 in \cite{jentzen2021existence} is, however, 
restricted to Lipschitz continuous target functions, 
to the \emph{standard mean square loss}, 
and to ANNs with one neuron on the input layer. ANNs with multi-dimensional input layer are not considered. 
In this work, we show existence   for \emph{shallow residual ANNs} with \emph{multi-dimensional input, general continuous target functions, and general strictly convex loss functions}.  As activation function we consider the \emph{ReLU activation} 
$ \R \ni x \mapsto \max\{ x, 0 \} = x^+ \in \R $. 
Interestingly, the multi-dimensionality has a significant impact on the problem. It induces an additional geometric assumption that has not appeared in the one-dimensional setting. This assumption is not an artifact of our approach, but a general prerequisite for the existence of minimizers. We provide a counterexample where the respective assumption is not satisfied and where no minimizers exist, see Example~\ref{exa:counter}.


We provide the main existence result of our work in two versions. \cref{thm:main} is the most general version. A slightly more restrictive but more intuitive version   is   \cref{thm:main_simplified0} below. There, we restrict to $L^{p}$-loss and work with a simpler geometric assumption compared with the general version.

%

Let us now formally introduce  shallow residual ANNs with $d_{\mathrm{in}}\in\N$ neurons on the input layer ($d_{\mathrm{in}}$ being fixed throughout the article) and $d\in\N_{0}$ neurons on the hidden layer. These can be parametrized by elements
\begin{equation}
	\label{eq:structurized}
	\IW=(W^1, W^2,b) \in \R^{d_{\mathrm{in}}\times (d+1)}\times \R^{d} \times \R^{d+1}
	= \colon \cW_d,
\end{equation}
where we enumerate the elements of the matrix and vectors as
\begin{align}
W^1=(w^{1}_{i,j})_{i=1,\dots,d_{\mathrm {in}},\,j=0,\dots,d}, \ W^2 = (w_j^2)_{j=1, \dots, d}  \text{ \ and \ }  b=(b_j)_{j=0, \dots, d}.
\end{align}
Moreover, for every $ j \in \{ 0, 1, \dots, d \} $ 
we write 
$
w_j^1 =  
(w_{ i, j }^1)_{i=1,\dots, d_{\mathrm{in}}}
\in \R^{ d_{ \mathrm{in} } }
$. 
We call $\IW$ a \emph{network configuration} and $\cW_d$ the \emph{parametrization class}.
We often refer to a configuration of a neural network as the (\emph{neural}) \emph{network}~$\IW$. 
A configuration $ \IW \in \cW_d $ is associated with the
function 
$
\mathfrak N^\IW \colon \R^{ d_{ \mathrm{in} } } \to \R
$ 
given by
\begin{align}
	\label{eq:response}
	\mathfrak N^\IW(x) 
	=   w_0^1 \cdot x   +b_{0} + 
	\sum_{j=1}^d w_{j}^2 
	\bigl( w_{j}^1 \cdot x + b_j \bigr)^{ \! + },
\end{align}
where $\cdot$ denotes the scalar product.
We call $\mathfrak N^\IW$ \emph{realization function} or \emph{response} of the network~$ \IW $. Note that in general the response of a network is a continuous and piecewise affine function from  
$\R^{ d_{ \mathrm{in} } }$ to $\R$. 
We conceive $\IW \mapsto \mathfrak N^{\IW}$ as a parametrization of a class of potential response functions  
$ 
\{ \mathfrak N^{\IW} \colon \IW \in \cW_d \} 
$ 
in a minimization problem.

Intuitively, $W^1$ describes the linear part (\emph{weight matrix}) in the affine transformation 
from the $ d_{ \mathrm{in} } $-dimensional input layer 
to the $ d $-dimensional hidden layer as well as in the skip connection, $W^2$ describe the linear part (weight matrix) in the affine transformation 
from the $ d $-dimensional hidden layer 
to the $ 1 $-dimensional output layer and $b$ describes the additive part (\emph{bias vector})
in the affine transformation 
from the $ d_{ \mathrm{in} } $-dimensional input layer 
to the $ d $-dimensional hidden layer as well as in the skip connection. 

%

It follows the simple version of our existence result.

\begin{theorem}[Existence of minimizer -- residual ANNs]
\label{thm:main_simplified0}
Let $ p \in (1,\infty) $, $ d_{\mathrm{in}} \in \N $, $ d \in \N_{0} $, 
let 
$ f \colon \R^{ d_{\mathrm{in}} } \to \R $ 
and 
$ h \colon \R^{ d_{\mathrm{in}} } \to [0,\infty) $
be continuous, 
assume that 
$
  h^{ - 1 }( (0,\infty) )
$
is a bounded and convex set, 
and let 
$
  \mathrm{err}^{p} \colon \mathcal W_d
  \to 
  \R
$
satisfy 
for all 
$
  \mathbb W 
  \in 
  \mathcal W_d
$
that 
\begin{align} 
  \int_{ 
    \R^{ 
      d_{\mathrm{in}} 
    } 
  }
  |
\textstyle 
    f(x)
    -
    \mathfrak{N}^{ \IW }( x )
  |^p\,
  h(x)
  \,
  \d x
\end{align}
where $ \mathcal W_d $ is as in \eqref{eq:structurized}
and where $ \mathfrak{N}^{ \IW } $ is as in \eqref{eq:response}.
 Then there exists a $ \IW' \in \mathcal W_d $
such that 
$
  \mathrm{err}^{p}( \IW' )
  =
  \inf_{ \IW \in \mathcal W_d }
  \mathrm{err}^{p}( \IW )
$. 
\end{theorem}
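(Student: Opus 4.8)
The plan is to compactify the class of responses, extract a limiting ``generalized response'' from a minimizing sequence, and then show that the genuinely new (discontinuous) limits are strictly suboptimal, so that the minimum is attained inside the class $\{\mathfrak N^\IW : \IW \in \cW_d\}$. Throughout, set $K := \overline{h^{-1}((0,\infty))}$, which by hypothesis is bounded and convex, hence compact, and note that $\err^p(\IW) = \int_K |f - \mathfrak N^\IW|^p\, h\,\d x$. Pick a minimizing sequence $(\IW^{(n)})_{n\in\N} \subseteq \cW_d$ with $\err^p(\IW^{(n)}) \to m := \inf_{\IW \in \cW_d} \err^p(\IW)$. Since $f$ is continuous on the compact set $K$, it lies in $L^p(K, h\,\d x)$, and the boundedness of the errors shows that $(\mathfrak N^{\IW^{(n)}})_{n}$ is bounded in $L^p(K, h\,\d x)$.

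First I would analyze, neuron by neuron, how the sequence can degenerate. Writing $w_j^{1,(n)} = r_j^{(n)} a_j^{(n)}$ with $|a_j^{(n)}| = 1$ and $r_j^{(n)} \ge 0$, the $j$-th summand of $\mathfrak N^{\IW^{(n)}}$ equals $\mu_j^{(n)}(a_j^{(n)} \cdot x + c_j^{(n)})^+$ with $\mu_j^{(n)} := w_j^{2,(n)} r_j^{(n)}$ and $c_j^{(n)} := b_j^{(n)}/r_j^{(n)}$. After passing to a subsequence, each $a_j^{(n)} \to a_j$, and on $K$ one of three things happens: (i) $(\mu_j^{(n)}, c_j^{(n)})$ converges in $\R^2$ and the summand tends to a genuine ReLU term $\mu_j(a_j \cdot x + c_j)^+$; (ii) the hinge hyperplane eventually avoids $K$, so that on $K$ the neuron is eventually identically zero or affine, the affine part being absorbed into the skip-connection parameters $(w_0^1, b_0)$; or (iii) $|\mu_j^{(n)}| \to \infty$. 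A lone neuron of type (iii) would drive the error to $+\infty$ on a half-space meeting $K$, which is impossible for a minimizing sequence; hence the type-(iii) neurons must occur in compensating groups whose limiting normals coincide and whose offsets collapse, $c_j^{(n)} \to c$. Using the elementary identity $\mu[(t + \eps)^+ - t^+] \to H\,\1\{t > 0\}$ as $\mu \to \infty$, $\eps \to 0$, $\mu \eps \to H$, every such group contributes in the limit a half-space jump $H\,\1\{a \cdot x + c > 0\}$. Collecting the cases, a subsequence of $(\mathfrak N^{\IW^{(n)}})$ converges $h\,\d x$-a.e.\ on $K$ to a generalized response
\begin{equation*}
  G(x) = \beta_0 + \beta \cdot x + \sum_{k} \mu_k (a_k \cdot x + c_k)^+ + \sum_{\ell} H_\ell\, \1\{ a_\ell \cdot x + c_\ell > 0 \},
\end{equation*}
where the last, ``jump'' sum is the only feature preventing $G$ from being a genuine response. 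By Fatou's lemma applied to the nonnegative integrands $|f - \mathfrak N^{\IW^{(n)}}|^p h$, this gives $\int_K |f - G|^p\, h\,\d x \le m$.

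The crux is to rule out the jump sum in an optimal $G$. Suppose $G$ has a jump of height $H_\ell \ne 0$ across $\cH := \{a_\ell \cdot x + c_\ell = 0\}$. Here the convexity of $K$ provides the decisive dichotomy: either the half-space indicator is constant on $K$, in which case the jump term reduces to a constant and is absorbed into $\beta_0$; or $K$ has points strictly on both sides of $\cH$, and then, by convexity, $\cH$ meets the interior of $K$ in a set of positive $(d_{\mathrm{in}} - 1)$-dimensional measure. In the latter case I would localize to a thin slab $\{|a_\ell \cdot x + c_\ell| < \delta\}$ and replace the jump by a steep continuous ramp of the same height---realizable by separating the collapsed pair of ReLU neurons that produced it, so the competitor remains in $\cW_d$. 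Because $t \mapsto |t|^p$ is strictly convex for $p > 1$ and $f$ is continuous (hence nearly constant across the thin slab), a one-dimensional computation along the normal direction shows the ramp strictly beats the jump on a set of positive $h\,\d x$-measure while agreeing with $G$ outside the slab; for $\delta$ small the total error strictly decreases. This produces a genuine response with error $< \int_K |f - G|^p\, h\,\d x \le m$, contradicting the definition of $m$. Hence an optimal $G$ carries no interior jumps, equals some $\mathfrak N^{\IW'}$ with $\IW' \in \cW_d$, and $\err^p(\IW') = m$.

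The main obstacle is twofold. The heavier technical part is the degeneration bookkeeping in the second step: one must prove that bounded error with diverging magnitudes $\mu_j^{(n)}$ forces exactly the compensating-group structure and produces only half-space jumps, with no residual unbounded affine term, while simultaneously tracking finitely many hinges that may share limiting normals. The conceptually essential part is the strict-improvement step, where convexity of $K$ is used precisely to guarantee the constant-or-crosses-the-interior dichotomy; this is the property that fails in Example~\ref{exa:counter}, where a discontinuous generalized response over a non-convex support is genuinely optimal and no minimizer in $\cW_d$ exists.
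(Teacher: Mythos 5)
Your overall architecture is exactly the paper's: build a closure of the response class containing discontinuous limits, use Fatou to get an optimal element of the closure, then use strict convexity of $t\mapsto|t|^p$ plus convexity of $h^{-1}((0,\infty))$ to replace any jump by a steep ramp (realizable with the same neuron budget) and strictly improve, forcing the optimum to be a genuine response. However, your classification of the limiting objects has a genuine gap: compensating groups of diverging neurons do \emph{not} only produce constant-height jumps $H_\ell\,\1\{a_\ell\cdot x+c_\ell>0\}$. If the normals of a compensating pair tilt toward their common limit at rate comparable to $1/\mu_j^{(n)}$, the limit is a jump whose height varies affinely along the breakline. Concretely,
\begin{equation}
  \tfrac n2\bigl[(x_1+\tfrac{x_2}{n})^+ - (x_1-\tfrac{x_2}{n})^+\bigr] \longrightarrow x_2\,\1\{x_1>0\}
  \qquad (n\to\infty),
\end{equation}
and more generally $\tfrac12((\delta+n\vecn)\cdot x+\mathfrak b-no)^+ - \tfrac12((-\delta+n\vecn)\cdot x-\mathfrak b-no)^+ \to (\delta\cdot x+\mathfrak b)\,\1_{\{\vecn\cdot x>o\}}$ pointwise off the breakline, where $\delta$ need not be parallel to $\vecn$. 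This is precisely why the paper's notion of generalized response allows multiplicity-two terms $(\delta_k\cdot x+\mathfrak b_k)\1_{A_k}(x)$ with no constraint linking $\delta_k$ to the normal of $A_k$. Your elementary identity $\mu[(t+\eps)^+-t^+]\to H\,\1\{t>0\}$ captures only the case of a fixed common normal, so your limit object $G$ is not the full closure, and your Fatou step does not apply to all minimizing sequences.

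This matters downstream: your strict-improvement step is formulated only for a constant jump height $H_\ell\neq 0$, whereas the sloped jumps require a different competitor and a different one-dimensional computation. The paper's proof of \cref{prop:minimal} splits into exactly these two cases ($\delta_K$ and $\mathfrak n_K$ linearly independent versus dependent): for the sloped jump it uses the ramp $\tfrac12(\delta_K\cdot x+\kappa x_1)^+-\tfrac12(-\delta_K\cdot x+\kappa x_1)^+$, whose transition slab has width proportional to $|\delta_K'\cdot x'|/\kappa$ varying with the position $x'$ along the breakline, and the strict-convexity (secant) argument is run fiberwise in that variable-width slab. Relatedly, the ``no residual unbounded affine term'' bookkeeping you defer as an obstacle is where the paper invests its cell decomposition (the sets $A_J$ with $\mu(A_J)>0$, on which convergence of the local gradients $\cD_J^{(n)}$ and offsets $\beta_J^{(n)}$ is extracted via Fatou); a neuron-by-neuron analysis does not cleanly handle divergence of the skip connection compensated by divergence of hidden neurons. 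So: right strategy, same as the paper's, but the closure you construct is too small, and the suboptimality argument as written does not cover the limit points that are actually possible.
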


Loosely speaking, 
\cref{thm:main_simplified0} 
reveals in the situation of 
\emph{shallow residual ReLU ANNs} 
with a $ d_{ \mathrm{in} } $-dimensional input layer 
(with $ d_{ \mathrm{in} } $ neurons on the input layer), 
with a $ d $-dimensional hidden layer 
(with $ d $ neurons on the hidden layer), 
and with a skip-connection from the input layer 
to the output layer 
that there exist minimizers in the loss optimization landscape for the $L^p$-loss.

\cref{thm:main_simplified0} is a direct consequence
of the more general 
\cref{thm:main} below.
For this, let $ \mu \colon \mathcal{B}( \R^{ d_{ \mathrm{in} } } ) \to [0,\infty) $ 
be a finite measure on 
the Borel sets of $ \R^{ d_{ \mathrm{in} } } $, 
let $ \ID = \mathrm{supp}(\mu) $ be the support of $ \mu $, 
and 
let $ \mathcal L \colon \ID \times \R \to [0,\infty) $ 
be a product measurable function, the \emph{loss function}. 
We aim to minimize the error
\begin{equation}
  \mathrm{err}^{ \cL }( \IW ) 
  =
  \int_{ \ID } 
  \mathcal{L}( x, \mathfrak{N}^\IW( x ) ) \, \dd \mu(x)
\end{equation}
over all $\IW \in\cW_d$ for a given $d\in\N_0$
and we let
\begin{align} 
\label{eq_min}
\mathrm{err}^{\cL}_{d}=\inf_{\IW \in \cW_d}\mathrm{err}^{\cL}(\IW)
\end{align}
the minimal error with $ d $ neurons on the hidden layer. We stress that if there does not exist a neural network $\IW \in \cW_d$ satisfying $\err^\cL(\IW) = \err_d^\cL$ then every sequence $(\IW_n)_{n \in \N}\subseteq \cW_d$ of networks satisfying $\lim_{n \to \infty} \err^\cL(\IW_n) = \err_d^\cL$ diverges to infinity.

It follows  the main result.

\begin{theorem}[Existence of minimizers -- general loss functions and fully-connected residual ANNs] 
\label{thm:main}
Assume that $ \ID = \mathrm{supp}(\mu) $ is compact, 
assume that $ \mu $ has a continuous Lebesgue density $ h \colon \R^{ d_\mathrm{in} } \to [0,\infty) $, 
assume that 
for every hyperplane $ H \subseteq \R^{ d_{ \mathrm{in} } } $ intersecting the interior of the convex hull of $ \ID $ 
there is an element $ x \in H $ with $ h(x) > 0 $, 
and 
assume that the loss function $ \cL \colon \ID \times \R \to [0,\infty) $ 
satisfies the following assumptions:
\begin{enumerate}
\item[(i)]
(Continuity in the first argument) 
For every $ y \in \R $ it holds that $ \ID \ni x \mapsto \cL(x,y) \in \R $ is continuous. 
\item[(ii)]
(Strict convexity in the second argument) 
For all $ x \in \ID $ it holds that $ \R \ni y \mapsto \cL(x,y) \in \R $ is strictly convex and attains its minimum. 
\end{enumerate}
Then it holds for every $ d \in \N_0 $ that there exists an optimal network $ \IW\in\cW_d $ 
with $ \mathrm{err}^{\cL}( \IW ) = \mathrm{err}^{ \cL }_d $.
\end{theorem}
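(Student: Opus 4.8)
The plan is to compactify the optimization by embedding the genuine responses into a larger class of \emph{generalized responses} and then to show that every generalized response which is not an honest network response is strictly suboptimal. I would start from a minimizing sequence $(\IW_n)_{n\in\N}\subseteq\cW_d$ with $\lim_{n\to\infty}\err^{\cL}(\IW_n)=\err^{\cL}_d$ and study the associated responses $\mathfrak{N}^{\IW_n}$. The first point to exploit is that assumption~(ii) — strict convexity together with attainment of the minimum — forces each map $\R\ni y\mapsto\cL(x,y)$ to be coercive, and, since $\ID$ is compact and $\cL$ is continuous in $x$, this coercivity is locally uniform on $\ID$. Combined with $\sup_n\err^{\cL}(\IW_n)<\infty$, this prevents $\mathfrak{N}^{\IW_n}$ from blowing up on any set of positive $\mu$-measure, which is exactly the a~priori control that makes a limiting analysis possible.

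The technical core, and what I expect to be the main obstacle, is the \emph{blow-up / compactness analysis} classifying the possible $\mu$-a.e.\ limits of $\mathfrak{N}^{\IW_n}$. I would split the $d$ hidden neurons according to the asymptotics of their parameters: neurons whose data $(w^1_{j,n},w^2_{j,n},b_{j,n})$ stay bounded converge along a subsequence to genuine ReLU ridges $a(n\cdot x+c)^+$; neurons whose kink hyperplane $\{w^1_{j,n}\cdot x+b_{j,n}=0\}$ escapes the convex hull of $\ID$ degenerate to affine pieces absorbed into the skip connection; and neurons whose output weights $w^2_{j,n}$ diverge must, by the coercivity bound, cancel asymptotically, the only bounded discontinuous residue such a cancellation can leave on $\ID$ being a \emph{jump} across a common limiting hyperplane. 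Making this dichotomy precise, controlling the transition slabs (whose $\mu$-measure tends to $0$), and bounding the number of ridges and jumps in terms of $d$ is the delicate part, yielding a $\mu$-a.e.\ limit
\begin{equation}
\label{eq:genresp}
g(x)=\alpha\cdot x+\beta+\textstyle\sum_{k}a_k\,(n_k\cdot x+c_k)^+ +\sum_{l}s_l\,\mathbf 1_{\{m_l\cdot x+e_l>0\}}.
\end{equation}
Once $\mathfrak{N}^{\IW_n}\to g$ holds $\mu$-a.e., continuity of $\cL(x,\cdot)$ and Fatou's lemma give $\err^{\cL}(g)\le\liminf_{n\to\infty}\err^{\cL}(\IW_n)=\err^{\cL}_d$.

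It remains to show that the jumps in \eqref{eq:genresp} are suboptimal, and here the geometric hyperplane assumption and strict convexity enter decisively. Suppose $g$ has a jump of height $s\neq0$ across $H=\{m\cdot x+e=0\}$. If $H$ misses the interior of the convex hull of $\ID$ the jump is $\mu$-a.e.\ irrelevant and may be dropped; otherwise the assumption produces a point of $H$ with $h>0$, so by continuity $\mu$ charges a full neighbourhood of that point. I would replace the jump by the continuous two-neuron combination $\tilde J_\delta(x)=\tfrac{s}{2\delta}\bigl((m\cdot x+e+\delta)^+-(m\cdot x+e-\delta)^+\bigr)$, which agrees with the jump outside the slab $\{|m\cdot x+e|<\delta\}$ and interpolates linearly across it. Parametrising the slab by $t=m\cdot x+e$ and letting $\psi$ be the remaining continuous part of $g$, the induced decrease of error is, to leading order in $\delta$, a positive multiple of
\begin{equation}
\tfrac{1}{2}\bigl(\cL(x,\psi(x))+\cL(x,\psi(x)+s)\bigr)-\int_{0}^{1}\cL\bigl(x,\psi(x)+us\bigr)\,\d u ,
\end{equation}
integrated against $h$ over $H$ with respect to surface measure. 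By the trapezoidal inequality this integrand is strictly positive precisely because $\cL(x,\cdot)$ is strictly convex and $s\neq0$, while positivity of the surface integral is guaranteed by the geometric assumption; hence for all small $\delta>0$ the modification strictly lowers the error.

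Finally I would assemble the pieces. Applying this replacement simultaneously at every jump hyperplane (the finitely many slabs are disjoint for small $\delta$ away from their lower-dimensional intersections) produces a \emph{continuous} generalized response $\tilde g$ with $\err^{\cL}(\tilde g)<\err^{\cL}(g)\le\err^{\cL}_d$ whenever $g$ carries at least one jump. But a jump-free instance of \eqref{eq:genresp} is exactly the response $\mathfrak{N}^{\IW'}$ of a genuine network $\IW'\in\cW_d$, the affine part forming the skip connection and the at most $d$ ridges the hidden neurons, so $\err^{\cL}(\IW')<\err^{\cL}_d=\inf_{\IW\in\cW_d}\err^{\cL}(\IW)$, a contradiction. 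Therefore $g$ has no jumps, $g=\mathfrak{N}^{\IW'}$ for some $\IW'\in\cW_d$, and $\err^{\cL}(\IW')=\err^{\cL}(g)\le\err^{\cL}_d$ forces $\err^{\cL}(\IW')=\err^{\cL}_d$, producing the desired minimizer.
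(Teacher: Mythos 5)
Your overall strategy is the same as the paper's: embed the network responses in a closure consisting of ``generalized responses'' obtained as $\mu$-a.e.\ limits of minimizing sequences, use coercivity of $\cL(x,\cdot)$ plus Fatou to get a minimizer in the closure, and then show that any discontinuous limit is strictly suboptimal by replacing each jump with a steep two-neuron linear interpolation and invoking strict convexity (your trapezoidal inequality is exactly the paper's secant comparison $Q(x')<0$), with the hyperplane assumption guaranteeing that the resulting surface integral is strictly negative. However, there is a genuine gap in your classification \eqref{eq:genresp} of the possible limits. A pair of hidden neurons with diverging output weights need not leave a \emph{constant-height} jump as residue: the two-neuron responses
\begin{equation}
  \cR_n(x)=\tfrac12\bigl((\delta+n\vecn)\cdot x+\mathfrak b-no\bigr)^{+}-\tfrac12\bigl((-\delta+n\vecn)\cdot x-\mathfrak b-no\bigr)^{+}
\end{equation}
converge, off the hyperplane $\partial A=\{\vecn\cdot x=o\}$, to $(\delta\cdot x+\mathfrak b)\,\1_{A}(x)$, where $\delta$ may have a component \emph{tangential} to $\partial A$. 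The jump height of this limit at a point $x\in\partial A$ is $\delta\cdot x+\mathfrak b$, which varies affinely along the hyperplane; such a function cannot be written as (affine part) $+$ (ReLU ridges) $+$ (constant jumps $s_l\,\1_{\{m_l\cdot x+e_l>0\}}$), since all terms of your ansatz have locally constant discontinuity along any hyperplane. So the limit object $g$ you extract need not lie in the class you work with.

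This omission propagates into your suboptimality step: your replacement $\tilde J_\delta(x)=\tfrac{s}{2\delta}\bigl((m\cdot x+e+\delta)^{+}-(m\cdot x+e-\delta)^{+}\bigr)$ only reproduces constant jumps, so the case of an affinely varying jump (the paper's case where $\delta_K$ and the normal $\vecn_K$ are linearly independent) is not covered. The paper handles it by an affine change of coordinates putting the jump hyperplane at $\{x_1=0\}$ with $\mathfrak b_K=0$ and $\delta_K\perp e_1$, and then compares against the tilted steep family
\begin{equation}
  \tilde\cR^{\kappa}(x)=\tfrac12\,(\delta_K\cdot x+\kappa x_1)^{+}-\tfrac12\,(-\delta_K\cdot x+\kappa x_1)^{+},
\end{equation}
whose interpolation slab has width proportional to $|\delta_K'\cdot x'|/\kappa$, i.e.\ varying along the hyperplane; the secant/strict-convexity computation then goes through with $\Upsilon_{x'}=|\delta_K'\cdot x'|$ in place of your constant $s$, choosing a point on the hyperplane where both $h>0$ and $\Upsilon_{x'}>0$. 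Your argument adapts to this case, but as written the proof is incomplete without it; the remaining ingredients (Fatou, the neuron-counting bound that ridges cost one and jumps cost two so that the repaired response lies in $\cW_d$, and the final contradiction with the infimum) match the paper's proof of its Propositions on generalized responses and minimality.
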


\cref{thm:main} is an immediate consequence of \cref{prop:minimal} below. In the following example we apply the main theorem for regression problems. In particular, the arguments entail validity of \cref{thm:main_simplified0}.

\begin{exa}[Regression problem]
Let $ \mu $ be as in \cref{thm:main}, 
let $ f \colon \R^{ d_\mathrm{in} } \to \R $ be continuous, 
and let $ L \colon \R \to [0,\infty) $ be a strictly convex function that attains its minimum. 
Then the function $ \cL \colon \R^{ d_\mathrm{in} } \times \R \to [0,\infty) $ given by
\begin{equation}
  \cL(x,y)=L(y-f(x))
\end{equation}
for all $ x \in \R^{ d_{ \mathrm{in} } } $ and $ y \in \R $ satisfies the assumptions of \cref{thm:main} 
and \cref{thm:main} allows us to conclude that the infimum 
\begin{equation}
  \inf _{\IW\in\cW_d} \int L(\mathfrak N^\IW(x)-f(x)) \,\dd \mu(x)
\end{equation}
is attained for a network  $\IW\in\cW_d$.
\end{exa}

	Our proof technique is different from the one in~\cite{jentzen2021existence}. In~\cite{jentzen2021existence}, it is shown that for the approximation of a one-dimensional Lipschitz target function $f$ on the interval $[0,1]$ one can restrict attention to ANNs having a response whose Lipschitz constant is bounded by $d \|f\|_{\mathrm{Lip}[0,1]}$, where $d$ denotes the number of neurons on the hidden layer. Since the latter is a compact space one can deduce existence of global minimizers.
	In this work, we first propose a closure of the space of response functions consisting of all network responses and additional limiting functions that are discontinuous. In a second step, we apply convexity arguments to construct for each discontinuous function in the functions space a network response that performs strictly better in the approximation of $f$. Both steps work in arbitrary input dimension, for arbitrary (possible non Lipschitz continuous) target functions $f$ and only uses the strict convexity of the loss function.

We note that the set of the realization functions 
of shallow residual ReLU ANNs with $ d_{ \mathrm{in} } $ neurons 
on the input layer, with $ d $ neurons on the hidden layer, 
and with a skip-connection from the input layer to the output layer coincides with the set of the realization functions of
shallow fully-connected feedforward ANNs with $ d_{\mathrm{in}} $ neurons on the input layer, 
with $ d + 1 $ neurons on the hidden layer, 
with the ReLU activation for $ d $ neurons on the hidden layer, 
and with the identity activation for the remaining neuron on the hidden layer. Moreover, the class of responses $ \{ \mathfrak N^{\IW} \colon \IW \in \cW_d\} $ 
that is considered in this article clearly contains all responses of shallow ANNs having only $d$ ReLU neurons on the hidden layer and no skip connection.
Conversely, since an affine function $ \mathfrak a \colon \R^{ d_{ \mathrm{in} } } \to \R $ 
can be represented as the response of two ReLU neurons, 
$ \{ \mathfrak N^{\IW} \colon \IW \in \cW_d\} $ is also contained in the class of responses of shallow ANNs
using $d+2$ neurons on the hidden layer that all apply the ReLU activation function. 
If $\mu$ is compactly supported, then the $\ID$-restricted  responses in $ \{ \mathfrak N^{\IW} \colon  \IW \in \cW_d\} $ even can be expressed by shallow ReLU networks with $d+1$ neurons.
	
The remainder of this article is organized as follows. In Section~\ref{sec:literature}, we summarize properties of the loss landscape in supervised learning and convergence statements for GD methods known in the literature. We stress that the analysis of the loss landscape is of particular interest since it effects the dynamics of all applied optimization methods. In Section~\ref{sec:genres}, we propose a closure of the space of response functions which contains all responses of shallow ReLU ANNs and additional discontinuous limit functions. In Section~\ref{sec:opt}, we show that the newly added functions are suboptimal. Moreover, we give a counterexample for Theorem~\ref{thm:main_simplified0} in the case that $h^{-1}((0,\infty))$ is a non-convex set, see Example~\ref{exa:counter}. We also give a necessary and sufficient condition that guarantees that the minimal error in the regression task strictly decreases after adding a ReLU neuron to the network architecture, see Proposition~\ref{cor:m} and Example~\ref{exa:counter2}. 

\section{Related work} \label{sec:literature}

\textbf{The loss landscape in supervised learning:} The analysis of the loss landscape in supervised learning is of special interest since its structure is important regardless of the applied optimization algorithm.
In the overparametrized regime, one can use Sard's theorem to show that, for smooth activation functions, the set of global minima forms for almost all input data a smooth manifold \cite{cooper2021global}. In~\cite{dereich2022}, the positive homogeneity of the ReLU activation is used to show that the configurations of shallow ReLU ANNs having the same response function is a smooth manifold, if one uses the minimal necessary number of neurons on the hidden layer in order to construct the particular response. This work also gives a sufficient condition for piecewise affine functions which implies that they can be written as responses of shallow ReLU ANNs. \cite{liu2021understanding} shows that, in the situation of shallow ReLU ANNs and finitely many data points, differentiable local minima are optimal within their cell of fixed activation pattern. They also give a criterion for when there exist non-differentiable local minima on the border of those cells.   
For the approximation of affine target functions one is able to completely classify the critical points and show the abstinence of local maxima \cite{cheridito2022landscape}.
For statements about the existence of and convergence to \emph{non-optimal local minima} in the training of (shallow) networks 
we refer the reader, e.g., 
to \cite{swirszcz2016local, safran2018spurious, Venturi2019Spurious,christof2022omnipresence,GentileWelper2022arXiv,Ibragimov2022arXiv}. 
See \cite{li2018visualizing} for visualizations of the loss landscape for various choices of network architectures. A good literature review regarding the loss landscape in neural network training can be found in \cite{E2020towards}. See also \cite{kainen2003best} for a result concerning the existence of a global minimum in the regression task of approximating functions in the space $ L^p( [0,1]^d) $ 
with shallow ANNs using \emph{heavyside activation}. Moreover, we refer to \cite{singerbest} for a general introduction into best approximators in normed spaces.  

\textbf{Convergence results for GD type optimization methods:} Several statements on the convergence of GD type optimization methods under the assumption (\ref{eq:a_priori_bound}) have been obtained in the literature. 
We refer the reader to
\cite{bolte2007lojasiewicz, Eberle2021arXiv, jentzen2021existence}
for results concerning gradient flows, 
to \cite{MR2197994, attouch2009convergence} for results concerning deterministic gradient methods,
to \cite{TADIC2015convergence,MR4056927, mertikopoulos2020sure, dereich2021convergence} for results concerning stochastic gradient methods
and
to \cite{dereich2021cooling} for results concerning gradient based diffusion processes.
Many of these results exploit \emph{Kurdyka-\L ojasiewicz} gradient type inequalities 
to establish convergence to a point (often a critical point) of the considered GD type optimization methods and we refer the reader to \cite{lojasiewicz1963propriete,lojasiewicz1965ensembles,lojasiewicz1984trajectoires} for classical results by \L ojasiewicz concerning 
gradient inequalities for analytic target functions and direct consequences for the convergence of gradient flows. In order to achieve (\ref{eq:a_priori_bound}), weight regularization can also be used to keep (stochastic) gradient descent from diverging, see~\cite[Lemma D.1]{dereich2023central}.

In the overparametrized setting, the stochastic noise in single-batch or mini-batch GD optimization methods vanishes when approaching the global minimum. This fact can be used to derive exponential convergence rates, see~\cite{wojtowytsch2023stochastic, gess2023convergence}.
For sophisticated convergence analysis for GD optimization methods in overparametrized regimes we point, e.g., to \cite{ChizatBach2018arXiv,Duetal2019arXivOverparametrized,Duetal2019arXivDeepOverparametrized,ChizatBach2020arXiv,Wojtowytsch2020arXiv} 
and the references therein.

\section{Generalized response of neural networks} \label{sec:genres}
We will work with more intuitive  geometric descriptions of realization functions of networks $\IW\in \cW_d$. We slightly modify the ideas given in \cite{dereich2022} and later introduce the notion of a \emph{generalized response}. Then, we show in Proposition~\ref{prop:genres} that, in very general approximation problems, there always exists a generalized response that solves the minimization task at least as well as the class $ \{ \mathfrak N^{\IW} \colon \IW \in \cW_d\} $.

We call a network $\IW\in\cW_d$ \emph{non-degenerate} iff 
for all $ j \in \{ 1, 2, \dots, d \} $ 
we have $ w_j^1 \not= 0 $. 
For a non-degenerate network $ \IW $, 
we say that the neuron $ j \in \{ 1, 2, \dots, d \} $  has
\begin{itemize} 
\item 
\emph{normal} 
$
  {\displaystyle 
  \vecn_j = 
  | w_j^1 |^{ - 1 } w_j^1 
  \in  \mathbb S^{d_{ \mathrm{in}-1}} 
  := \{ x \in \R^{ d_{ \mathrm{in} } } \colon |x| = 1 \} 
  } 
$,
\item \emph{offset} 
$
  o_j= - | w_j^1 |^{ - 1 } b_j \in \R
$, 
and
\item 
\emph{kink} 
$
  \Delta_j = | w_j^1 |  w_j ^2 \in \R
$. 
\end{itemize}
Moreover, we call the affine mapping $\mathfrak a \colon \R^{ d_{ \mathrm{in} } }\to \R$ given by
\begin{equation}
  \mathfrak a(x)=w_{0}^1 \cdot x+b_0
\end{equation}
\emph{affine background}. We call $(\vecn, o, \Delta,  \mathfrak a)$ with $\vecn = (\vecn_1, \dots, \vecn_{d})\in  (\mathbb S^{d_{ \mathrm{in}-1}})^d$, $o=(o_1, \dots, o_d)\in \R^d$, $\Delta=(\Delta_1, \dots, \Delta_d)\in\R^d$ and the affine function $\mathfrak a$ the \emph{effective tuple of $\IW$} and write $\cE_{d}$ for the set of all effective tuples using $d$ ReLU neurons.

First, we note that the response of a non-degenerate ANN $ \IW $ 
can be represented in terms of its effective tuple:
\begin{equation}
\begin{split}
\mathfrak N^\IW(x)&=\mathfrak a(x)+\sum_{j=1}^{d} w^2_j(w^1_j \cdot x+b_j)^{ + }=\mathfrak a(x)+\sum_{j=1}^{d}\Delta_j \Bigl(\frac 1{|w_j^1|} w^1_j \cdot x+\frac 1{|w_j^1|} b_j\Bigr)^{ + }\\
&=\mathfrak a(x) + \sum_{j=1}^{d} \Delta_j \bigl( \vecn_j \cdot x- o_j\bigr)^{ + }.
\end{split}
\end{equation} 
With slight misuse of notation we also write
\begin{equation}
  \mathfrak{N}^{\vecn, o, \Delta,\mathfrak a} \colon \R^{ d_{ \mathrm{in} } }\to \R, 
  \qquad 
  x\mapsto \mathfrak a(x)+ \sum_{j=1}^{d} \Delta_j \bigl( \vecn_j \cdot x- o_j\bigr)^{ + }
\end{equation}
and $\err^{\cL}(\vecn,o, \Delta, \mathfrak a)= \int \cL(x, \mathfrak N^{\vecn, o, \Delta, \mathfrak a}(x)) \, \dd \mu(x).$  
Although the tuple $(\vecn,o,\Delta,\mathfrak a)$ does not uniquely describe a neural network, it fully characterizes the response function and thus we will speak of the neural network with effective tuple  $(\vecn,o,\Delta,\mathfrak a)$.

We stress that also the response of a degenerate network $\IW$ can be  described as response associated to an effective tuple. Indeed, for every $j\in \{1, \dots, d\} $ with $w_j^1=0$  the respective neuron has a constant contribution  $w_j^2(b_j)^{ + }$. Now, one can choose an arbitrary normal $\vecn_j$ and offset $o_j$, set the kink equal to zero ($\Delta_j=0$) and add the constant $w_j^2(b_j)^{ + }$ to the affine background $\mathfrak a$. Repeating this procedure for every such neuron we get an effective tuple $(\vecn, o, \Delta, \mathfrak a) \in \cE_d$ that satisfies $\mathfrak N^{\vecn, o, \Delta, \mathfrak a} = \mathfrak N^{\IW}$. Conversely, for every effective tuple $(\vecn, o, \Delta,\mathfrak a) \in \cE_d$, the mapping
$\mathfrak N^{\vecn,o,\Delta,\mathfrak a}$ is the response of an appropriate network $\IW \in \cW_d$. In fact, for $j=1,\dots, d$,
one can choose $w_j^1=\vecn_j$, $b_j=-o_j$ and $w_j^2=\Delta_j$ and gets that  for all $x \in \R^{ d_{ \mathrm{in} } }$
\begin{equation}
	w_j^2(w_j^1 \cdot x + b_j)^{ + } = \Delta_j (\vecn_j \cdot x-o_j)^{ + }.
\end{equation}
Analogously,
one can choose $w_0^1 = \mathfrak a'$ and $b_0=\mathfrak a(0)$ such that for all $x \in \R^{ d_{ \mathrm{in} } }$
\begin{equation}
  \mathfrak a (x) = w_0^1 \cdot x + b_0.
\end{equation}
This entails that
\begin{equation}
  \mathrm{err}^{\cL}_d = \inf_{(\vecn,o,\Delta,\mathfrak b)\in\cE_d}\int \cL(x,\mathfrak N^{\vecn,o,\Delta,\mathfrak b}(x))\, \dd\mu(x)
\end{equation} 
and the infimum is attained iff there is a network $\IW\in\cW_d$ for which the infimum in~(\ref{eq_min}) is attained.

For an effective tuple $(\vecn, o, \Delta, \mathfrak a) \in \cE_d$,  we say that the $j$th ReLU neuron
has the \emph{breakline}
\begin{equation}
  H_j = \bigl\{ x \in \R^{ d_{ \mathrm{in} } } \colon \vecn_j \cdot x =  o_j \bigr\}
\end{equation}
and we call 
\begin{equation}
  A_j = \{ x \in \R^{ d_{ \mathrm{in} } } \colon \vecn_j \cdot x > o_j \}
\end{equation}
the \emph{domain of activity} of the $j$th ReLU neuron.
By construction, we have 
\begin{equation}
  \mathfrak{N}^{\vecn,o,\Delta,\mathfrak a}(x)
  = 
  \mathfrak{a}(x) 
  + 
  \sum_{j=1}^d 
  \bigl(
    \Delta_j( \vecn_j \cdot x - o_j )
  \bigr) 
  \1_{ A_j }( x ) 
  .
\end{equation}
Outside the breaklines the function 
$
  \mathfrak{N}^{ \vecn, o, \Delta, \mathfrak{a} } 
$ 
is differentiable with
\begin{equation}
  D \mathfrak{N}^{\vecn,o,\Delta,\mathfrak a}( x ) 
  =
  \mathfrak{a}'(x) 
  + 
  \sum_{ j = 1 }^d 
  \Delta_j \vecn_j 
  \1_{ A_j }( x ) 
  .
\end{equation}
Note that for each summand $j=1, \dots, d$ along the breakline the difference of the differential on $A_j$ and $\overline{A}_j^c$ equals $\Delta_j \vecn_j$ (which is also true for the response function $\mathfrak N^\IW$ provided that it is differentiable in the reference points and there does not exist a second neuron having the same breakline $H_j$).


For a better understanding of the optimization problem discussed in this article it makes sense to view the set of responses $ \{ \mathfrak N^\IW \colon \IW\in \cW_d\} $ as a subset  of the locally convex vector space $\cL^1_\mathrm{loc}$ of  locally integrable functions. Then the set of response functions is not closed in $\cL^1_\mathrm{loc}$ and the main task of this article is to show that functions in $\cL^1_\mathrm{loc}$ that can be approached by functions from $ \{ \mathfrak N^\IW \colon \IW\in \cW_d\} $ but are itself no response functions will provide larger errors than  the response functions.  


We now extend the family of response functions.
\begin{defi}  We call a function $\cR \colon \R^{ d_{ \mathrm{in} } } \to \R$ a \emph{generalized response} if it admits the following representation: there are
$K\in\N_0$,  a tuple of open half-spaces $\mathbf A=(A_1,\dots,A_K)$ of $\R^{ d_{ \mathrm{in} } }$ with pairwise distinct boundaries 
$\partial A_1,\dots,\partial A_K$, 
a vector $\mathbf m =(m_1,\dots,m_K)\in\{1,2\}^K$, an affine mapping $\mathfrak a \colon \R^{ d_{ \mathrm{in} } } \to \R$, 
vectors $\delta_1, \dots, \delta_K \in \R^{ d_{ \mathrm{in} } }$, 
and reals $\mathfrak b_1, \dots, \mathfrak b_K \in \R$ such that 
\begin{enumerate}
 \item[(i)]
it holds for all $x \in \R^{ d_{ \mathrm{in} } }$ that 
\begin{align}
\label{eq84529}
  \cR(x) 
  = \mathfrak{a}( x ) 
  + 
  \sum_{ k = 1 }^K 
  \bigl(
    \delta_k \cdot x + \mathfrak{b}_k
  \bigr) 
  \1_{ A_k }( x ) 
\end{align}
and 
\item[(ii)]
it holds for all $ k \in \{ 1, \dots, K \} $ with $ m_k = 1 $ that 
\begin{equation}
  \partial A_k \subseteq \{ x \in \R^{ d_{ \mathrm{in} } } \colon \delta_k\cdot x+\mathfrak b_k = 0 \}.
\end{equation}
\end{enumerate}
We will represent generalized responses as in \cref{eq84529}, 
we call  $ A_1, \dots, A_K $ 
\emph{the active half-spaces} of the response, 
and we call $ m_1, \dots, m_K $ 
the \emph{multiplicities} of the half-spaces $ A_1, \dots, A_K $. 
The minimal number  $m_1+\ldots+m_K$ that can be achieved in such a representation is called the \emph{dimension} of~$\cR$. 
For every $d\in\N_0$ we denote by $\mathfrak R_d$ the family of all generalized responses of dimension $d$ or smaller. We call a generalized response \emph{simple} if it is continuous which means that all multiplicities can be chosen equal to one. A response~$\cR\in\mathfrak R_d$ is called \emph{strict at dimension $d$}  if the response has dimension $d-1$ or  is discontinuous.
We denote by   $\mathfrak R_d^\mathrm{strict}$ the responses in $\mathfrak R_d$ that are strict at dimension $d$.
\end{defi}

\begin{rem}
Note that the sets $ \{ \mathfrak N^\IW \colon \IW\in \cW_d\} $ and $ \{ \cR \colon \cR\in \mathfrak R_d\text{ is simple}\} $ agree. 
\end{rem}

The condition $\partial A_k \subseteq \{ x \in \R^{ d_{ \mathrm{in} } } \colon \delta_k\cdot x+\mathfrak b_k = 0 \}$ is equivalent to the condition that $x\mapsto  \bigl(\delta_{k}\cdot x+\mathfrak b_{k}\bigr)\1_{A_k}(x)$ is continuous which, in particular, implies that $\delta_k \perp \partial A_k$. The next remark shows that every generalized response $\mathcal R \colon \R^{ d_{ \mathrm{in} } }\to \R$ of dimension $d$ or smaller (even the generalized responses with $\delta_k \not\perp \partial A_k$ for some $k$) is, on $ \R^{ d_{ \mathrm{in} } } \backslash ( \bigcup_{k=1, \dots, K} \partial A_k ) $, 
the limit of network responses of networks in $\cW_d$.

\begin{rem}[Asymptotic ANN representations for generalized responses] \label{rem:genres}
	Let $\vecn\in \mathbb S^{d_{ \mathrm{in}-1}}$, $\delta\in\R^{ d_{ \mathrm{in} } }$ and $o,\mathfrak b\in\R$ and set
	\begin{equation}
		A=\{x\in\R^{ d_{ \mathrm{in} } } \colon  \vecn\cdot x>o\}
		\qquad
		\text{and}
		\qquad 
		\forall \, x \in \R^{ d_{ \mathrm{in} } } \colon
		\cR(x) =
		( \delta \cdot x + \mathfrak{b} ) \1_A(x)
		.
	\end{equation}
	We will show that the generalized response $ \cR $ 
	is on $ \R^{ d_{ \mathrm{in} } } \backslash ( \partial A ) $ 
	the limit of the response of two ReLU neurons. For every $n\in\N$, the following function 
	\begin{equation}
		\cR_n(x) = 
		\frac 12\bigl((\delta+n\vecn)\cdot x+\mathfrak b-no\bigr)^{ + } 
		- \frac 12\bigl((-\delta+n\vecn)\cdot x-\mathfrak b-no\bigr)^{ + }
	\end{equation}
	is the response of a shallow ReLU ANN with two hidden ReLU neurons. 
	Now note that for all $ x \in A $ 
	one has $\vecn\cdot x-o>0$ so that
	\begin{equation}
		(\delta+n\vecn)\cdot x+\mathfrak b-no=n (\vecn\cdot x-o)+\delta\cdot x+ \mathfrak b\to \infty.
	\end{equation}
	Analogously, $(-\delta+n\vecn)\cdot x-\mathfrak b-no\to \infty$. Consequently,  there exists an $N \in \N$ depending on $x$ such that for all $n \ge N$
	\begin{equation}
		\cR_n(x)= \delta\cdot x+\mathfrak b =\cR(x).
	\end{equation}
	Conversely, for all $ x \in {\overline A}^c $ 
	one has $ \vecn \cdot x - o < 0 $ 
	so that analogously to above
	$
	(\delta+n\vecn)\cdot x+\mathfrak b-no\to-\infty
	$
	and  
	$
	(-\delta+n\vecn)\cdot x-\mathfrak b-no\to -\infty
	$. 
	Consequently,  there exists an $N \in \N$ depending on $x$ such that for all $n \ge N$
	\begin{equation}
		\cR_n(x)=0=\cR(x)
		.
	\end{equation}
	We thus represented $\cR$ as asymptotic response of a ReLU ANN 
	with two hidden neurons.

	For a  generalized response one replaces every term $(\delta_k\cdot x+\mathfrak b)\1_{A_k}(x)$ of multiplicity two (see~(\ref{eq84529})) by two ReLU neurons exactly as above. 
	Moreover, the terms with multiplicity one are responses of ANNs with one ReLU neuron. 
\end{rem}

In this section, we work with a general measure $ \mu $ which may have unbounded support. 
The assumptions on $ \mu $ are stated in the next definition. 

\begin{defi}\begin{enumerate}
\item[(i)] An element $ x $ of a hyperplane $ H \subseteq \R^{ d_{ \mathrm{in} } }$ is called \emph{$H$-regular} if $x\in \mathrm{supp}\, \mu|_A$ and $x\in \mathrm{supp}\, \mu|_{\overline A^c}$, where $A$ is an open half-space with $\partial A=H$.
\item[(ii)] A measure $\mu$ is called \emph{nice} if all hyperplanes have $\mu$-measure zero and if for every open half-space $A$ with $\mu(A),\mu(\overline A^c)>0$ the set of $\partial A$-regular points cannot be covered by finitely many hyperplanes different from $\partial A$.
\end{enumerate} 
\end{defi}

 Next, we will show that under quite weak assumptions there exist  generalized responses of dimension $d$  or smaller that achieve an  error of at most $\mathrm{err}_d ^\cL$. 

\begin{prop} \label{prop:genres} 
Assume that $ \mu $ is a nice measure on a closed subset $ \ID \subseteq \R^{ d_{ \mathrm{in} } } $ of $ \R^{ d_{ \mathrm{in} } } $  
and assume that the loss function $ \cL \colon \ID \times \R \to [0,\infty) $ is measurable and satisfies the following assumptions: 
		\begin{enumerate}
			\item [(i)] (Lower-semincontinuity in the second argument) For all $ x \in \ID $, $ y \in \R $ we have
			\begin{equation}
			\textstyle 
				\liminf_{y' \to y} \cL(x,y') \ge \cL(x,y).
			\end{equation}
			\item [(ii)] (Unbounded in the second argument) For all $ x \in \ID $ we have
			\begin{equation}
			\textstyle 
				\lim_{|y| \to \infty} \cL(x,y) = \infty.
			\end{equation}
		\end{enumerate}
Let $d\in\N_0$    with $\mathrm{err}_d^{\cL}<\infty$. 	Then there exists a generalized response $\cR\in\mathfrak R_d$ which satisfies 
\begin{equation}
  \int \cL(x, \cR(x))\,\dd\mu(x) =\overline{\mathrm{err}}^\cL_d:=  \inf_{\tilde \cR\in\mathfrak R_d} \int \cL(x, \tilde\cR(x))\,\dd\mu(x) .
\end{equation}
Furthermore, if $ d \ge 1 $, then the infimum
\begin{equation}
  \inf_{\tilde \cR \in \mathfrak R_d^{\mathrm{strict}}}  \int \cL(x, \tilde\cR(x))\,\dd\mu(x)
\end{equation}
is attained on $\mathfrak R_d^{\mathrm{strict}}$.
\end{prop}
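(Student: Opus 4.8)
The plan is to use the direct method: take a minimizing sequence in $\mathfrak R_d$ (respectively in $\mathfrak R_d^{\mathrm{strict}}$), extract a subsequence converging $\mu$-almost everywhere to a limit that still lies in the relevant class, and conclude by lower semicontinuity. The lower bound on the limiting error needs only the a.e.\ convergence together with assumption~(i): if $\cR_n \to \cR$ pointwise off a $\mu$-null set, then $\cL(x,\cR(x)) \le \liminf_{n} \cL(x,\cR_n(x))$ by lower semicontinuity in the second argument, and Fatou's lemma gives $\int \cL(x,\cR)\,\dd\mu \le \liminf_n \int \cL(x,\cR_n)\,\dd\mu = \overline{\err}^\cL_d$; since $\cR \in \mathfrak R_d$ the reverse inequality is automatic, so the infimum is attained at $\cR$. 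Because all hyperplanes are $\mu$-null (niceness), the union of the limiting breaklines is a null set, so a.e.\ convergence off the breaklines is exactly what the construction below must deliver.

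First I would reduce to a fixed combinatorial type. Every element of $\mathfrak R_d$ has a representation with $\sum_k m_k \le d$ and $m_k \in \{1,2\}$, so there are only finitely many possibilities for $K$ and for the multiplicity vector $\vecm$; it therefore suffices to prove attainment within each type and then select the best one. Within a type a response is parametrized by normals $\vecn_k \in \mathbb{S}^{d_{\mathrm{in}-1}}$, offsets $o_k \in \R$, the affine data $\delta_k,\mathfrak b_k$ of each neuron, and the affine background $\mathfrak a$. The normals live in the compact sphere, so after passing to a subsequence I may assume $\vecn_k^{(n)} \to \vecn_k$ for all $k$, and after a further subsequence each offset either converges in $\R$ or escapes to $\pm\infty$. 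If $o_k^{(n)} \to \pm\infty$ then, for every fixed $x$, the value $\1_{A_k^{(n)}}(x)$ is eventually constant (using $\vecn_k^{(n)}\to\vecn_k$), so that breakline disappears in the pointwise limit and the term either drops out or merges into the background; either way the dimension of the limit strictly decreases, and such neurons may be discarded.

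The main work, and the main obstacle, is to bound the affine data of the surviving neurons (those with $o_k^{(n)} \to o_k \in \R$), for which the limiting breaklines $\partial A_k = \{\vecn_k \cdot x = o_k\}$ are defined. The key is a blow-up estimate: on each full-dimensional cell $C$ of the limiting arrangement with $\mu(C)>0$ the restriction $\cR_n|_C$ is affine, say $\cR_n(x)=a_n^C\cdot x + c_n^C$, and if $(a_n^C,c_n^C)$ were unbounded then, normalizing by its norm, the limit would be a nonzero affine functional, so $|\cR_n| \to \infty$ on $C$ minus a single hyperplane, a set of positive $\mu$-measure since hyperplanes are $\mu$-null. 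By coercivity (assumption~(ii)) and Fatou this forces $\int \cL(x,\cR_n)\,\dd\mu \to \infty$, contradicting minimality. Hence each cell-affine function is bounded; a neuron whose breakline carries positive $\mu$-mass on both sides (guaranteed for genuinely separating breaklines by the regular-point clause of niceness) then recovers its $\delta_k,\mathfrak b_k$ as the bounded difference of the two adjacent cell functions, whereas a neuron with a $\mu$-null side has $\1_{A_k}$ constant $\mu$-a.e.\ and is absorbed into the background, again lowering the dimension. A final subsequence makes all surviving parameters converge, yielding $\cR \in \mathfrak R_d$ with $\cR_n \to \cR$ off the null union of breaklines; the continuity constraint for multiplicity-one neurons is a closed condition and hence passes to the limit, so $\cR$ genuinely lies in $\mathfrak R_d$.

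For the strict statement I would run the identical extraction on a minimizing sequence in $\mathfrak R_d^{\mathrm{strict}}$ and check that the limit $\cR$ is again strict. If $\cR$ is discontinuous it is strict by definition. If $\cR$ is continuous I claim its dimension is at most $d-1$: the limit uses no more neurons than the $\cR_n$, so the limiting dimension cannot exceed $\liminf$ of the dimensions of the $\cR_n$; thus if infinitely many $\cR_n$ have dimension $\le d-1$ we are done, and otherwise almost all $\cR_n$ are discontinuous of dimension $d$, carrying a multiplicity-two neuron with nonvanishing jump $\delta_k^{(n)}\cdot x+\mathfrak b_k^{(n)}$ along its breakline, whereupon continuity of $\cR$ forces this jump to vanish on $\partial A_k$, converting that neuron into a continuous multiplicity-one term and dropping the limiting dimension below $d$. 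In every case $\cR$ has dimension $\le d-1$ or is discontinuous, so $\cR \in \mathfrak R_d^{\mathrm{strict}}$, and the lower-semicontinuity argument above shows it attains the infimum. The crux throughout is the third paragraph: converting finiteness of the minimizing error into uniform bounds on the affine data via coercivity, while booking every degeneration (escaping offsets, merging breaklines, one-sided null mass) as a genuine drop in dimension so the limit never escapes $\mathfrak R_d$.
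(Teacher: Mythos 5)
Your proposal follows essentially the same route as the paper's proof: a minimizing sequence, subsequence extraction fixing the combinatorial type and making normals and offsets converge (offsets in the two-point compactification), a coercivity-plus-Fatou blow-up argument to bound the per-cell affine data, recovery of the neuron data as differences of adjacent cell functions at regular points supplied by niceness, multiplicity bookkeeping so the limit stays in $\mathfrak R_d$ (respectively $\mathfrak R_d^{\mathrm{strict}}$), and Fatou with lower semicontinuity to conclude attainment. The only points the paper executes more carefully are the normalization placing asymptotic areas that share a breakline on the same side of it, and the fact that when breaklines merge only the aggregated jump (not each neuron's individual $\delta_k,\mathfrak b_k$) converges --- both fall under the degeneration bookkeeping you explicitly flag, so the argument is sound.
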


\begin{proof}
Let $(\cR^{(n)})_{n \in \N}$ be a sequence of generalized responses in $\mathfrak R_d$ that satisfies
\begin{equation}
  \lim\limits_{ n \to \infty } 
  \int 
  \cL( x, \cR^{ (n) }( x ) ) \, 
  \dd \mu (x) 
  = 
  \overline{\mathrm{err}}_d^{ \cL }
  .
\end{equation}
We use the representations as in~\cref{eq84529} 
and write
\begin{equation}
  \cR^{ (n) }( x ) 
  =
  \mathfrak{a}^{ (n) }( x ) 
  +
  \sum_{ k = 1 }^{ K_n } 
  \bigl(
    \delta_k^{ (n) } \cdot x + \mathfrak{b}_k^{ (n) } 
  \bigr)
  \1_{ A_k^{ (n) } }( x )
  .
\end{equation}
Moreover, denote by 
$ \mathfrak{n}_k^{(n)} \in \mathbb S^{d_{ \mathrm{in}-1}} $ 
and 
$ o_k^{ (n) } \in \R $ 
the quantities with 
$
  A_k^{ (n) } 
  =
  \{ 
    x \in \R^{ d_{ \mathrm{in} } } \colon 
    \mathfrak{n}_k^{ (n) } \cdot x > o_k^{ (n) } 
  \} 
$ 
and by 
$
  \mathbf{m}^{ (n) } 
  = 
  ( m_1^{ (n) }, \dots, m_{ K^{(n)} }^{ (n) } ) 
$ 
the respective multiplicities.

\underline{1. Step:} Choosing an appropriate subsequence.

Since $ K^{(n)} \le d $ for all $ n \in \N $ 
we can choose a subsequence 
$
  ( \ell_n )_{ n \in \N } 
$ 
such that there exists $ K \in \N_0 $ with $ K^{ ( \ell_n ) } = K $ for all $ n \in \N $. 
For ease of notation we will assume that this is the case for the full sequence. 
With the same argument we can assume 
without loss of generality that 
there exists 
$ 
  \mathbf{m} = ( m_1, \dots, m_K ) \in \{ 1, 2 \}^K 
$ 
such that for all $ n \in \N $ we have 
$ 
  \mathbf{m}^{ (n) } = \mathbf{m} 
$.

Moreover, after possibly thinning the sequence again 
we can assure that for all $ k \in \{ 1, 2, \dots, K \} $ 
we have convergence 
$
  \mathfrak{n}^{ (n) }_k \to \mathfrak{n}_k 
$ 
in the compact space $\mathbb S^{d_{ \mathrm{in}-1}} $ and $ o_k^{ (n) } \to o_k $ 
in the two point compactification $ \R \cup \{ \pm \infty \} $. 
We assign each $ k \in \{ 1, 2, \dots, K \} $ 
an \emph{asymptotic active area} $ A_k $ 
given by
\begin{equation}
  A_k = 
  \{ 
    x \in \R^{ d_{ \mathrm{in} } } \colon \mathfrak{n}_k \cdot x > o_k 
  \}
\end{equation}
which is degenerate in the case where $ o_k \in\{ \pm \infty \} $.

We denote by $ H_k $ the respective breakline $ \partial A_k $. 
Even if, for every $ n \in \N $, the original breaklines $ \partial A_1^{ (n) }, \dots, \partial A_K^{ (n) } $ 
are pairwise distinct, this might not be true for the limiting ones. 
In particular, there may be several $ k $'s for which the asymptotic active areas may 
be on opposite sides of the same breakline. In that case we choose one side and 
replace for each~$ k $ with asymptotic active area on the opposite side 
its contribution in the 
representation \cref{eq84529} from 
$ 
  ( 
    \delta_k^{ (n) } \cdot x + \mathfrak{b}_k^{ (n) } 
  )
  \1_{ A_k^{ (n) } }( x ) 
$ 
to
\begin{equation}
  \bigl(
    - \delta_k^{ (n) } \cdot x - \mathfrak{b}_k^{ (n) } 
  \bigr)
  \1_{ ( \overline{A}_k^{ (n) } )^c }( x ) 
  + \delta_k^{ (n) } \cdot x 
  + \mathfrak{b}_k^{ (n) }
\end{equation}
which agrees with the former term outside the breakline (which is a zero set).
This means we replace $ \delta_k^{ (n) } $, 
$ \mathfrak{b}_k^{ (n) } $, 
and $ A_k^{ (n) } $ 
by 
$ - \delta_k^{ (n) } $, 
$ - \mathfrak{b}_k^{ (n) } $, 
and 
$
  \bigl( \overline{A}_k^{ (n) } \bigr)^c 
$, 
respectively, and adjust the respective affine background accordingly.
Thus we can assume without loss of generality that all asymptotic active areas 
sharing the same breakline are on the same side.

We use the asymptotic active areas to partition the space: 
let $ \IJ $ denote the collection of all subsets $ J \subseteq \{ 1, 2, \dots, K \} $ 
for which the set
\begin{equation}
\textstyle 
  A_J = 
  \bigl( 
    \bigcap_{ j \in J } A_j 
  \bigr) 
  \cap 
  \bigl( 
    \bigcap_{ j \in J^c } \overline A_j^c
  \bigr)
\end{equation}
satisfies $ \mu( A_J ) > 0 $. 
We note that the sets 
$
  A_J 
$, 
$ 
  J \in \IJ 
$, 
are non-empty, open, and pairwise disjoint 
and their union has full $ \mu $-measure since
\begin{equation}
\textstyle 
  \mu\Bigl( 
    \R^{ d_{ \mathrm{in} } } \backslash 
    \bigcup_{ J \subseteq \{ 1, 2, \dots, K \} } A_J 
  \Bigr) 
  \le 
  \sum\limits_{ j = 1 }^K 
  \mu\bigl( H_j \bigr) = 0 .
\end{equation}
Moreover, for every $ J \in \IJ $ and every compact set $ B $ with $ B \subseteq A_J $ 
one has from a $ B $-dependent $ n $ onward that 
the generalized response $ \cR^{ (n) } $ 
satisfies for all $ x \in B $ that
\begin{equation}
  \cR^{(n)}(x)= \cD_J ^{(n)} \cdot x +\beta_J^{(n)} ,
\end{equation}
where  
\begin{equation}
  \cD_J^{(n)} 
  := { \mathfrak{a}' }^{ (n) } 
  + \sum_{ j \in J } \delta_j^{ (n) } 
\qquad 
  \text{and}
\qquad 
  \beta_J^{ (n) } := \mathfrak{a}^{ (n) }( 0 ) 
  + \sum_{ j \in J } \mathfrak{b}_j^{ (n) } .
\end{equation}

Let $ J \in \IJ $. 
Next, we show that along an appropriate subsequence, 
we have convergence of 
$
  ( \cD_J^{ (n) } ) _{n \in \N}
$ 
in 
$
  \R^{ d_{ \mathrm{in} } } 
$. 
First assume that along a subsequence 
one has that $ ( | \cD_J^{ (n) } | )_{n \in \N} $ converges to $ \infty $. 
For ease of notation 
we assume without loss of generality that 
$
  | \cD_J^{ (n) } | \to \infty 
$. 
We let
\begin{equation}
  \cH_J^{ (n) } = 
  \{ 
    x \in \R^{ d_{ \mathrm{in} } } \colon \cD_J^{ (n) } \cdot x + \beta_J = 0 
  \} .
\end{equation}
For every $ n $ with $ \cD_J^{ (n) } \not= 0 $, $ \cH_J^{ (n) } $ 
is a hyperplane which can be parametrized by taking a normal and 
the respective offset. 
As above we can argue that along an appropriate subsequence 
(which is again assumed to be the whole sequence) 
one has convergence of the normals in $ \mathbb S^{d_{ \mathrm{in}-1}} $ and 
of the offsets in $ \R\cup\{ \pm \infty \} $. 
We denote by $ \cH_J $ the hyperplane being associated 
to the limiting normal and offset (which is assumed to be the empty set 
in the case where the offsets do not converge in $ \R $). 
Since the norm of the gradient $ \cD_J^{ (n) } $ tends 
to infinity we get that for every $ x \in A_J \backslash \cH_J $ 
one has $ | \cR^{ (n) }( x ) | \to \infty $ and, 
hence, $ \cL( x, \cR^{ (n) }( x ) ) \to \infty $. 
Consequently, Fatou implies that
\begin{equation}
\begin{split}
  \liminf\limits_{ n \to \infty } 
  \int_{ A_J \backslash \cH_J }  
  \cL( x, \cR^{ (n) }( x ) ) \, \dd \mu(x) 
&
  \ge 
  \int_{ A_J \backslash \cH_J } 
  \liminf\limits_{ n \to \infty } 
  \cL( x, \cR^{ (n) }( x ) ) 
  \, \dd \mu(x) 
  = \infty 
\end{split}
\end{equation}
contradicting the asymptotic optimality of $ ( \cR^{ (n) } )_{ n \in \N } $. 
We showed that the sequence $ ( \cD_J^{ (n) } )_{ n \in \N } $ 
is precompact and by switching to an appropriate subsequence 
we can guarantee that the limit 
$
  \cD_J = \lim_{ n \to \infty } \cD_J^{ (n) } 
$ exists.

Similarly we show that along an appropriate subsequence $ ( \beta_J^{ (n) } )_{ n \in \N } $ 
converges to a value $ \beta_J \in \R $. 
Suppose this were not the case, 
then there were a subsequence along which $ | \beta_J^{ (n) } | \to \infty $ 
(again we assume for ease of notation that this is the case along the full sequence). 
Then for every $ x \in A_J $, 
$ | \cR^{ (n) }( x ) | \to \infty $ and 
we argue as above that this contradicts 
the optimality of $ ( \cR^{ (n) } )_{ n \in \N } $. 
Consequently, we have for an appropriately thinned sequence 
on a compact set $ B \subseteq A_J $ uniform convergence 
\begin{equation}
\label{eq87314}
  \lim_{ n \to \infty } 
  \cR^{ (n) }( x ) 
  = \cD_J \cdot x + \beta_J .
\end{equation}

Since $ \bigcup_{ J \in \IJ } A_J $ has 
full $ \mu $-measure we get 
with the lower semicontinuity of $ \cL $ in the second argument and 
Fatou's lemma that for every measurable 
function $ \cR \colon \R^{ d_{ \mathrm{in} } } \to \R $ 
satisfying for each $ J \in \IJ $, $ x \in A_J $ 
that
\begin{equation}
\label{eq:DefR}
  \cR(x) = \cD_J \cdot x + \beta_J
\end{equation}
we have 
\begin{equation}
\begin{split}
  \int 
  \mathcal{L}( x, \cR(x) ) 
  \, \dd \mu(x) 
&
  = 
  \int 
    \liminf\limits_{ n \to \infty} \cL( x, \cR^{ (n) }( x ) ) 
  \, \dd \mu (x) 
\\
&
  \le 
  \liminf\limits_{ n \to \infty } 
  \int 
  \cL( x, \cR^{ (n) }( x ) ) 
  \, \dd \mu( x ) 
  = 
  \overline{\mathrm{err}}_{d}^{ \cL }
  .
\end{split}
\end{equation}

\underline{Step 2:} $ \cR $ may be chosen as 
a generalized response of dimension $ d $ or smaller.
    
We call a summand $ k \in \{ 1, 2, \dots, K \} $ 
degenerate if $ A_k $ or 
$ \overline{A}_k^c $ 
has $ \mu $-measure zero. 
We omit every degenerate summand $ k $ in the sense 
that we set $ \delta_k^{ (n) } = 0 $ and 
$ \mathfrak{b}_k^{ (n) } = 0 $ for all $ n \in \N $ 
and note 
that by adjusting the affine background appropriately 
we still have validity of \cref{eq87314} 
with the same limit on all relevant cells 
and, in particular, $ \mu $-almost everywhere. 
    
Let now $ k $ be a non-degenerate summand. 
Since $ \mu $ is nice there exists a $ \partial A_k $-regular point $ x $ that 
is not in $ \bigcup_{ A \in \IA \colon A \not= A_k } \partial A $, 
where $ \IA := \{ A_j \colon j \text{ is non-degenerate} \} $. 
We let 
\begin{equation}
  J^x_- 
  = 
  \{ j \colon x \in A_j \} 
\qquad 
  \text{and}
\qquad 
  J^x_+ = 
  J^x_- \cup \{ j \colon A_j = A_k \} 
  .
\end{equation}
Since $ x \in \mathrm{supp}( \mu|_{ \overline{A}_j^c } ) $ we get 
that the cell $ A_{ J^x_- } $ has strictly positive 
$ \mu $-measure so that $ J^x_- \in \IJ $. 
Analogously, 
$ x \in \mathrm{supp}( \mu|_{ A_j } ) $ entails 
that $ J^x_+ \in \IJ $. 
(Note that $ J_+^x $ and $ J_-^x $ are just the cells that 
lie on the opposite sides of the hyperplane $ \partial A_k $ at $ x $.) 
We thus get that 
\begin{equation}
  \delta_{A_k}^{(n)} 
  := 
  \sum_{ j \colon A_j = A_k } 
  \delta_j^{ (n) } 
  = 
  \cD_{ J_+^x }^{ (n) } 
  - 
  \cD_{ J_-^x }^{ (n) }
  \to \cD_{ J_+^x } - \cD_{ J_-^x } 
  =: 
  \delta_{ A_k } 
  ,
\end{equation}
where the definitions of $ \delta_{ A_k }^{ (n) } $ and 
$ \delta_{ A_k } $ do not depend 
on the choice of $ x $. 
Analogously,
\begin{equation}
  \mathfrak{b}_{ A_k }^{(n)} 
  := 
  \sum_{ j \colon A_j = A_k } 
  \mathfrak{b}_j^{ (n) } 
  = 
  \beta_{ J_+^x }^{ (n) } 
  - 
  \beta_{ J_-^x }^{ (n) } 
  \to 
  \beta_{ J_+^x }
  -
  \beta_{ J_-^x }
  =: 
  \mathfrak b_{A_k}
  .
\end{equation}
    
Now for general $ J \in \IJ $, 
we have
\begin{equation}
  \cD_J\leftarrow \cD_J^{(n)}= {\mathfrak a'}^{(n)} +\sum_{A\in \{A_j:j\in J\}} \delta_A^{(n)}
\qquad 
  \text{and}
\qquad 
  \beta_J\leftarrow \beta_J^{(n)}= {\mathfrak a}^{(n)}(0) +\sum_{A\in \{A_j:j\in J\}} \mathfrak b_A^{(n)}  
  . 
\end{equation}
Since 
$
  \sum_{ A \in \{ A_j \colon j \in J \} } \delta_A^{ (n) } 
$ 
and 
$
  \sum_{ A \in \{ A_j \colon j \in J \} } \mathfrak b_A^{ (n) }  
$ 
converge to $ \sum_{ A\in \{A_j \colon j \in J \} } \delta_A $ and  
$ \sum_{ A \in \{ A_j \colon j \in J \} } \mathfrak{b}_A $, 
respectively, we have that 
$
  ( {\mathfrak{a}'}^{ (n) } )_{ n \in \N } 
$ 
and 
$
  ( \mathfrak{a}^{ (n) }(0))_{ n \in \N }
$ 
converge and there is an appropriate affine function $ \mathfrak{a} $ 
such that for all $J\in\IJ$ and  $x\in A_J$
\begin{equation}
  \cR(x)
  = 
  \cD_J\cdot x+\beta_J
  = 
  \mathfrak{a}(x) 
  + 
  \sum_{ A \in \IA } 
  \bigl( \delta_A\cdot x +\mathfrak b_A \bigr)
  \1_A(x)
  .
\end{equation}
So far, we have only used the definition of $\cR$ on $\bigcup_{J\in\IJ}A_J$ and 
we now assume that $\cR$ is chosen in such a way that 
the latter identity holds for all $x\in \R^{ d_{ \mathrm{in} } }$ 
(by possibly changing the definition on a $\mu$-nullset).
We still need to show that $\cR$ is a generalized response of dimension $d$ or smaller.

Every active area $A\in\IA$ that is the asymptotic active area 
of a single non-degenerate summand $k$ with $m_k=1$ is assigned the multiplicity one. 
All other non-degenerate active areas get multiplicity two. 
Then the overall multiplicity (the sum of the individual multiplicities) is smaller or equal to the dimension~$d$. To see this recall that every  active area $A\in\IA$ that is the asymptotic area of more than one summand or one summand of multiplicity two also contributed at least two to the multiplicity   of the approximating responses. All degenerate summands do not contribute at all although they contribute to the approximating responses.

It remains to show that $\cR$ is indeed a generalized response with the active areas $\IA$ having the above multiplicities.  For this it remains to show continuity of
    $\1_A(x)(\delta_A\cdot x+\mathfrak b_A)$ for all $A\in\IA$ with assigned multiplicity one. Suppose that the $k$th summand is the unique summand that contributes to such an $A$. Then $\delta_k^{(n)}=\delta_A^{(n)}\to \delta_A$ and $\mathfrak b_k^{(n)}=\mathfrak b^{(n)}_A\to \mathfrak b_A$. Moreover, one has
\begin{equation}
    \{x\in\R^{ d_{ \mathrm{in} } } \colon \mathfrak{n}_k^{(n)}\cdot x- o_k^{(n)}=0\}\subseteq  \{x\in\R^{ d_{ \mathrm{in} } } \colon \delta _k^{(n)}\cdot x+ \mathfrak b_k^{(n)}=0\}
\end{equation}
    which entails that, in particular, $\delta_k^{(n)}$ is a multiple of $\mathfrak{n}_k^{(n)}$. Both latter vectors converge which also entails that the limit $\delta_A$ is a multiple of $\mathfrak{n}_k$.  To show that
\begin{equation}
    \partial A\subseteq \{x\in\R^{ d_{ \mathrm{in} } } \colon \delta _A\cdot x+ \mathfrak b_A=0\}
\end{equation}
is satisfied  it thus  suffices to  verify that one point of the set on the left-hand side lies also in the set on the right-hand side.
 This is indeed the case since $o_k\mathfrak{n}_k$ is in the set on the left-hand side and
\begin{equation}
     \delta_A\cdot (o_k \mathfrak{n}_k)+\mathfrak b_A=\lim_{n\to\infty} \underbrace {\delta _k^{(n)} \cdot (o_k^{(n)}\mathfrak{n}_k^{(n)})+ \mathfrak b_k^{(n)}}_{=0},
\end{equation}
where we used that $x=o_k^{(n)}\mathfrak{n}_k^{(n)}$ satisfies 
by assumption $\delta_k^{(n)} \cdot x+\mathfrak b_k^{(n)}=0$.

\underline{Step 3:} The infimum over all strict responses is attained.

We suppose that $(\cR^{(n)})_{n \in \N}$ is a sequence of strict generalized responses satisfying
\begin{equation}
  \lim\limits_{n \to \infty} \int \cL(x, \cR^{(n)}(x)) \, \dd \mu(x) = \inf_{\tilde \cR \in \mathfrak R_d^{\mathrm{strict}}}  \int \cL(x, \tilde\cR(x))\,\dd\mu(x).
\end{equation}
Then we can find a subsequence of responses in $\mathfrak R_{d-1}$ or a subsequence of responses where at least one active area has multiplicity two. 
In the former case the response constructed above is a generalized response of dimension $d-1$ or lower which is strict at dimension $d$. Conversely, in the latter case the construction from above will lead to a generalized response that has at least one active area with multiplicity two which, in turn, implies that $\cR$ is either of dimension strictly smaller than $d$ or is discontinuous. 
\end{proof}

\section{Discontinuous responses are not optimal} \label{sec:opt}
In this section, we show that generalized responses that contain discontinuities are not optimal in the minimization task for a loss function that is continuous in the first argument and strict convex in the second argument. This proves Theorem~\ref{thm:main}  since all continuous generalized responses can be represented by shallow residual ReLU networks.

In the proofs we will make use of the following properties of the  loss functions $\cL$ under consideration.

\begin{lemma} \label{lem:convex}
Let $ \cL \colon \ID \times \R \to [0,\infty) $ be a function satisfying the following assumptions:
\begin{enumerate}
\item[(i)]
(Continuity in the first argument) 
For every $ y \in \R $ it holds that 
$ \ID \ni x \mapsto \cL(x,y) \in \R $ is continuous. 
\item[(ii)] 
(Strict convexity in the second argument) 
For all $ x \in \ID $ it holds that 
$ \R \ni y \mapsto \cL(x,y) \in \R $ 
is strictly convex and attains its minimum.  
\end{enumerate}
Then 
\begin{enumerate}
\item[(I)] 
\label{lem:item:I}
it holds that 
$ \cL \colon \ID\times \R \to [0,\infty) $ 
is continuous, 
\item[(II)] 
\label{lem:item:II}
it holds 
for every compact $ K \subseteq \R $ 
that the function 
$
  \ID \ni x \mapsto \cL( x, \cdot )|_K\in C(K,\R)
$ 
is continuous with respect to the supremum norm, 
\item[(III)] 
\label{lem:item:III}
it holds that there exists a unique 
$
  \mathfrak{m} \colon \ID \to \R
$
which satisfies 
for every $ x \in \ID $ 
that 
\begin{equation}
\textstyle 
  \cL( x, \mathfrak{m}(x) )
  =
  \min_{ y \in \R } \cL(x,y) ,
\end{equation}
and 
\item[(IV)]
\label{lem:item:IV}
it holds that 
$
  \mathfrak{m} \colon \ID \to \R
$
and 
$ 
  \ID \ni x \mapsto \min_{ y \in \R } \cL( x, y ) \in \R
$ 
are continuous.
\end{enumerate}
\end{lemma}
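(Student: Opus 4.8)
The plan is to treat (III) as essentially immediate and to make everything else rest on one classical fact about convex functions on $\R$, namely that pointwise convergence forces local uniform convergence. For (III), fix $x \in \ID$; since $y \mapsto \cL(x,y)$ is strictly convex and attains its minimum by assumption (ii), this minimizer is unique (if two distinct points both realized the minimum, strict convexity at their midpoint would give a strictly smaller value), so I define $\mathfrak{m}(x)$ to be this unique minimizer, which gives both existence and uniqueness of the map $\mathfrak{m}$. I also record that each slice $y \mapsto \cL(x,y)$, being a finite convex function on $\R$ that attains its minimum, is \emph{coercive}, i.e.\ $\lim_{|y|\to\infty}\cL(x,y)=\infty$; otherwise boundedness on one of the half-lines $[\mathfrak{m}(x),\infty)$ or $(-\infty,\mathfrak{m}(x)]$, on which the function is monotone, would force its one-sided derivative (nonnegative and nondecreasing there) to vanish identically and hence $\cL(x,\cdot)$ to be constant, contradicting strict convexity. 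Coercivity will be used only in (IV).

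For the analytic core I would establish: if $(x_n)_{n} \subseteq \ID$ converges to $x$, then $\cL(x_n,\cdot) \to \cL(x,\cdot)$ uniformly on every compact $K=[a,b]$. The mechanism is equi-Lipschitz bounds: for a finite convex $g$ on $\R$ the slope $\frac{g(v)-g(u)}{v-u}$ is nondecreasing in its arguments, so for $u,v \in [a,b]$ it is squeezed between the difference quotient of $g$ across $[a-1,a]$ and that across $[b,b+1]$. Applying this to $g=\cL(x_n,\cdot)$ and using assumption (i) at the four fixed points $a-1,a,b,b+1$ (whose values converge, hence are bounded) yields a Lipschitz constant for $\cL(x_n,\cdot)|_{[a,b]}$ that is uniform in $n$. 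Equicontinuity together with the pointwise convergence $\cL(x_n,y) \to \cL(x,y)$ from (i) then gives uniform convergence on $[a,b]$. This is exactly the assertion that $x \mapsto \cL(x,\cdot)|_K \in C(K,\R)$ is sequentially, hence (as $\ID \subseteq \R^{d_{\mathrm{in}}}$ is metric) genuinely, continuous for the supremum norm, proving (II). Statement (I) then follows by a standard splitting: for $(x_n,y_n)\to(x,y)$ enclose all $y_n$ and $y$ in a fixed compact $K$ and bound $|\cL(x_n,y_n)-\cL(x,y)| \le \|\cL(x_n,\cdot)-\cL(x,\cdot)\|_{C(K)} + |\cL(x,y_n)-\cL(x,y)|$, where the first term vanishes by (II) and the second by continuity (convexity) of $y \mapsto \cL(x,y)$.

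For (IV) I would fix $x_n \to x$, write $y^* = \mathfrak{m}(x)$ and $M(x)=\cL(x,y^*)$, and proceed in three steps. \emph{Upper bound:} $M(x_n) \le \cL(x_n,y^*) \to M(x)$ by assumption (i), so $\limsup_n M(x_n) \le M(x)$. \emph{Boundedness of minimizers:} using coercivity at $x$ choose $R>|y^*|$ with $\cL(x,y)>M(x)+2$ for $|y|\ge R$; by (II) on $[-R,R]$ one has, for large $n$, that $\cL(x_n,y^*) < M(x)+\tfrac12$ and $\cL(x_n,\pm R)>M(x)+\tfrac32$, and then convexity of $\cL(x_n,\cdot)$ (monotone slopes, with $y^*$ between $-R$ and $R$) forces $\cL(x_n,y)>M(x)+\tfrac32 > \cL(x_n,y^*) \ge M(x_n)$ for all $|y|>R$; hence $\mathfrak{m}(x_n) \in [-R,R]$ eventually. \emph{Identification of the limit:} any subsequential limit $\bar y$ of the bounded sequence $(\mathfrak{m}(x_n))$ satisfies $\cL(x,\bar y) = \lim_k \cL(x_{n_k},\mathfrak{m}(x_{n_k})) = \lim_k M(x_{n_k})$ by the joint continuity (I), and combining $\cL(x,\bar y)\ge M(x)$ with the upper bound gives $\cL(x,\bar y)=M(x)$, so $\bar y = y^*$ by the uniqueness in (III). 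Thus every subsequence has a further subsequence converging to $y^*$, whence $\mathfrak{m}(x_n)\to\mathfrak{m}(x)$, i.e.\ $\mathfrak{m}$ is continuous; continuity of $x \mapsto \cL(x,\mathfrak{m}(x))$ then follows from (I).

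I expect the main obstacle to be the boundedness-of-minimizers step in (IV): assumption (ii) only guarantees coercivity of each slice $\cL(x,\cdot)$ separately, and one must upgrade this to a statement controlling where the minimizers $\mathfrak{m}(x_n)$ can lie. The device that makes this work is to combine the \emph{local} uniform control from (II) on a fixed compact window with the \emph{global} rigidity of convexity (monotonicity of slopes) to rule out minimizers escaping to infinity, rather than attempting any uniform coercivity estimate directly. The convex-convergence input underlying (II) is classical, so the only genuinely new bookkeeping is choosing the window $[-R,R]$ and the slack constants consistently.
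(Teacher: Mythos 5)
Your proof is correct, but it takes a more self-contained and, in part, genuinely different route than the paper's. The paper disposes of (I) and (II) by citing Rockafellar (Theorems 10.7 and 10.8), takes (III) as immediate, and proves (IV) with a two-point localization: for $y_1 < \mathfrak m(x') < y_2$, continuity in the first argument at the three points $y_1,\mathfrak m(x'),y_2$ yields a neighborhood $U$ of $x'$ on which $\cL(x,\mathfrak m(x')) < \cL(x,y_1)\wedge\cL(x,y_2)$, and convexity then gives $\cL(x,y)\ge \cL(x,y_1)\wedge\cL(x,y_2)$ for all $y\notin[y_1,y_2]$; hence on $U$ both the global minimum and the minimizer are trapped in $[y_1,y_2]$, continuity of the minimum follows from (II), and letting $y_1,y_2$ shrink to $\mathfrak m(x')$ gives continuity of $\mathfrak m$ --- no coercivity and no subsequence extraction needed. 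You instead prove (II) from scratch via equi-Lipschitz slope bounds (essentially re-deriving the cited Rockafellar result in dimension one) and then obtain (I) as a corollary of (II), a clean dependency the paper does not make explicit. For (IV) you run the standard argmin-continuity scheme: coercivity of each slice (correctly deduced from strict convexity \emph{plus} attainment of the minimum --- attainment alone would not suffice), an eventual trapping window $[-R,R]$ for the minimizers, and identification of subsequential limits via the joint continuity (I) and the uniqueness in (III). Both mechanisms are sound: the paper's localization is shorter and sidesteps coercivity entirely, since the dip below the boundary values at two test points already confines the minimizer, while your version is fully self-contained and gives a quantitative picture of where the minimizers live.
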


\begin{proof}
Regarding (I): 
The proof can, e.g, be found in \cite[Theorem 10.7]{rockafellar1970convex}.\smallskip

\noindent 
Regarding (II): 
The proof can, e.g., be found in \cite[Theorem 10.8]{rockafellar1970convex}.\smallskip

\noindent 
Regarding (III): 
The existence and uniqueness of $\mathfrak{m} \colon \ID \to \R $ 
is an immediate consequence of property~(ii). 

\noindent 
Regarding (IV): 
We show continuity of $x \mapsto \min_y \cL(x,y)$ and $x \mapsto \mathfrak m(x)$. Let $x' \in \ID$ and choose $y_1, y_2 \in \R$ with $y_1 < \mathfrak m(x') < y_2$.
Now, there exists a neighborhood $U \subseteq \ID$ of $x'$ such that
\begin{equation}
  \cL(x, \mathfrak m(x')) < \cL(x, y_1) \wedge \cL(x,y_2)
\end{equation}
for all $ x \in U $ and 
with the strict convexity of $y \mapsto \cL(x,y)$ we get for all $y \notin [y_1,y_2]$
\begin{equation}
  \cL(x, y) \ge \cL(x, y_1) \wedge \cL(x,y_2).
\end{equation}
Hence, for all $x \in U$ we have $\min_{y \in \R} \cL(x,y)=\min_{y \in [y_1,y_2]} \cL(x,y)$ 
and the continuity of $ x \mapsto \min_y \cL(x,y) $ 
follows from (II). 
Moreover, since $y_1$ and $y_2$ where arbitrary we also have continuity of $x \mapsto \mathfrak m(x)$.
\end{proof}

Clearly, we have that 
\begin{equation}
  \int \cL(x,\mathfrak m(x)) \, \dd \mu(x) = \inf_{f \in C(\ID, \R)} \int \cL(x,f(x)) \, \dd \mu(x)
\end{equation}
so that the minimization task reduces to finding a good approximation 
for $ \mathfrak{m} $ on the support of $ \mu $. 
If $ \mathfrak{m} $ is the response of an ANN $ \IW' \in \cW_d $ for $ d \in \N $, 
then $ \IW' $ minimizes the error 
$ \cW_d \ni \IW \mapsto \err^\cL( \IW ) \in \R $. 
If $ \mathfrak{m} $ is not representable by a network using $ d $ ReLU neurons 
then one expects the minimal error to strictly decrease after adding a ReLU neuron 
to the network structure, i.e., $ \err_{ d + 1 }^{ \cL } < \err_d^{ \cL } $. 
This is true in many situations (see \cref{cor:m}) 
but there exist counterexamples to this intuition 
(see \cref{exa:counter2}).

We note that the set of generalized responses is invariant 
under right applications of affine transformations that are one-to-one.

\begin{lemma}
\label{le:affinetrafo}
Let $ d \in \N_0 $, 
let 
$ \cR \colon \R^{ d_{ \mathrm{in} } } \to \R $ a mapping, 
and 
let $ \varphi \colon \R^{ d_{ \mathrm{in} } } \to \R^{ d_{ \mathrm{in} } } $ 
be an affine mapping that is injective. 
Then the following properties hold for $ \cR $ 
iff the corresponding properties hold for 
$ \cR \circ \varphi $:
\begin{enumerate}
\item[(i)] $\cR\in \mathfrak R_d$
\item[(ii)] $\cR\in \mathfrak R_d^\mathrm{strict}$
\item[(iii)] $\cR$ is simple.
\end{enumerate}
\end{lemma}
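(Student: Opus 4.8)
The plan is to show that right-composition with $\varphi$ sends a representation of the form \eqref{eq84529} to another representation of the very same shape, with the same number $K$ of half-spaces and the same multiplicity vector $\mathbf m$, and that this correspondence is reversible via $\varphi^{-1}$. Since $\varphi\colon\R^{d_{\mathrm{in}}}\to\R^{d_{\mathrm{in}}}$ is affine and injective between spaces of equal dimension, it is in fact an affine isomorphism; I would write $\varphi(x)=Mx+c$ with $M\in\R^{d_{\mathrm{in}}\times d_{\mathrm{in}}}$ invertible and $c\in\R^{d_{\mathrm{in}}}$, and record that $\varphi$ is then a homeomorphism whose inverse $\varphi^{-1}$ is again affine and injective.

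First I would take an arbitrary representation of $\cR$ as in \eqref{eq84529} with affine background $\mathfrak a$, half-spaces $A_1,\dots,A_K$, vectors $\delta_k$, reals $\mathfrak b_k$, and multiplicities $m_k$, and compute the effect of precomposition. The background $\mathfrak a\circ\varphi$ is again affine; each linear factor transforms as $\delta_k\cdot\varphi(x)+\mathfrak b_k=(M^\top\delta_k)\cdot x+(\delta_k\cdot c+\mathfrak b_k)=:\tilde\delta_k\cdot x+\tilde{\mathfrak b}_k$, again affine in $x$; and for the indicators I would use $\1_{A_k}\circ\varphi=\1_{\varphi^{-1}(A_k)}$, noting that the preimage $\tilde A_k:=\varphi^{-1}(A_k)$ of an open half-space under an affine isomorphism is again an open half-space (if $A_k=\{y\colon\ell_k(y)>0\}$ with $\ell_k$ non-constant affine, then $\ell_k\circ\varphi$ is non-constant affine). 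Hence
\begin{equation}
  (\cR\circ\varphi)(x)=(\mathfrak a\circ\varphi)(x)+\sum_{k=1}^K\bigl(\tilde\delta_k\cdot x+\tilde{\mathfrak b}_k\bigr)\1_{\tilde A_k}(x),
\end{equation}
which has the form \eqref{eq84529}.

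It then remains to check that this candidate is a genuine generalized response of the same type. The boundaries satisfy $\partial\tilde A_k=\varphi^{-1}(\partial A_k)$, so the assumed pairwise distinctness of the $\partial A_k$ transfers to the $\partial\tilde A_k$ because $\varphi$ is a bijection. For the multiplicity-one continuity condition from the definition, I would observe that $\partial A_k\subseteq\{y\colon\delta_k\cdot y+\mathfrak b_k=0\}$ implies, after applying $\varphi^{-1}$, that $\partial\tilde A_k=\varphi^{-1}(\partial A_k)\subseteq\{x\colon\delta_k\cdot\varphi(x)+\mathfrak b_k=0\}=\{x\colon\tilde\delta_k\cdot x+\tilde{\mathfrak b}_k=0\}$, so this condition is preserved verbatim with the same multiplicity assignment. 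Thus every representation of $\cR$ with total multiplicity $\sum_k m_k$ yields a representation of $\cR\circ\varphi$ with the same total multiplicity, which gives $\dim(\cR\circ\varphi)\le\dim(\cR)$.

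Finally I would conclude all three equivalences. Applying the previous step to the admissible map $\varphi^{-1}$ and the response $\cR\circ\varphi$ yields $\dim(\cR)\le\dim(\cR\circ\varphi)$, so the dimensions coincide; this settles (i) since membership in $\mathfrak R_d$ is exactly the inequality $\dim\le d$. Because $\varphi$ is a homeomorphism, $\cR$ is continuous iff $\cR\circ\varphi$ is, which gives (iii). For (ii), strictness at dimension $d$ is the disjunction ``$\dim\le d-1$ or discontinuous'', and both the dimension and the continuity status are preserved, so $\cR\in\mathfrak R_d^\mathrm{strict}$ iff $\cR\circ\varphi\in\mathfrak R_d^\mathrm{strict}$. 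The only point requiring genuine care -- and the closest thing to an obstacle -- is the bookkeeping ensuring that the transformation neither lowers nor raises the minimal achievable total multiplicity; this is handled precisely by invoking that $\varphi^{-1}$ is itself an admissible affine map, so the correspondence between representations is a bijection preserving $\sum_k m_k$.
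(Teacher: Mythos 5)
Your proof is correct and follows essentially the same route as the paper's: compute $\cR\circ\varphi$ explicitly, observe that half-spaces, boundaries, and the multiplicity-one continuity condition transform verbatim under the affine isomorphism, and then apply the same argument to $\varphi^{-1}$ to get the reverse inequality. Your explicit bookkeeping that the minimal total multiplicity (the dimension) is preserved is slightly more detailed than the paper's remark that ``one can choose the same multiplicities,'' but the substance is identical.
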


\begin{proof}
Suppose that $\cR\in\mathfrak R_d$ 
is represented as in \cref{eq84529} 
with the active areas being for all $ k \in \{ 1, 2, \dots, K \} $
\begin{equation}
  A_k = 
  \{
    x \in \R^{ d_{ \mathrm{in} } } \colon  \vecn_k \cdot x > o_k
  \} 
  .
\end{equation}
We represent the affine function $\varphi$ as  $\varphi(x)=\cA x+b$ with $\cA\in\R^{d_\mathrm{in}\times {d_\mathrm{in}}}$ and $b\in\R^{ d_{ \mathrm{in} } }$. Then obviously
\begin{equation}
  \cR( \varphi( x ) ) 
  = 
  (a \circ \varphi)( x ) 
  +
  \sum_{ k = 1 }^K 
  (
    \delta_k \cdot \varphi( x ) + \mathfrak{b}_k
  )
  \1_{ A_k }( \varphi( x ) ) 
  =
  \tilde{a}(x) +\sum_{k=1}^K\1_{\tilde A_k}(x) (\tilde \delta_k\cdot x+\tilde{\mathfrak b}_k),
\end{equation}
with 
$
  \tilde{a} = a \circ \varphi 
$, 
$
  \tilde{\delta}_k = \cA^\dagger \delta_k 
$, 
$
  \tilde b_k = \mathfrak b_k + \delta_k \cdot b 
$, 
and 
\begin{equation}
  \tilde A_k=\{x\in\R^{ d_{ \mathrm{in} } } \colon  \vecn_k\cdot (\cA x+b) > o_k\}= \{x\in\R^{ d_{ \mathrm{in} } } \colon  (\cA^\dagger \vecn_k)\cdot x   > o_k- \vecn_k\cdot b\}
\end{equation}
where $ ( \cdot )^{ \dagger } $ denotes the transpose of a vector or a matrix. 
Clearly, continuity of the summands is preserved and, hence, one can choose 
the same multiplicities. 
Therefore, $ \cR \circ \varphi $ is again in $ \mathfrak{R}_d $ and 
it is even strict at dimension $ d $ or simple if this is the case for $ \cR $. 
Applying the inverse affine transform $ \varphi^{ - 1 } $ we also obtain equivalence of the properties.
\end{proof}

We are now in the position to prove the main statement of this article, Theorem~\ref{thm:main}. It is an immediate consequence of the following result. We stress that the statement of \cref{prop:minimal} is stronger in the sense that 
it even shows that in many situations the strict generalized responses perform strictly worse 
than the simple responses. 

\begin{prop}\label{prop:minimal}
Suppose that the assumptions of \cref{thm:main} are satisfied. 
Let $d\in\N_0$. Then there exists an optimal network $\IW\in\cW_d$ with 
\begin{equation}
  \mathrm{err}^{ \cL }( \IW )
  =
  \mathrm{err}_d^{ \cL } 
  = 
  \overline{\mathrm{err}}_d^{ \cL } 
  .
\end{equation}
If additionally $ d > 1 $ and $\mathrm{err}^\cL_d<\mathrm{err}^\cL_{d-1}$,
then one has that
\begin{align}\label{eq8246}
\inf_{\cR \in \mathfrak R_d^{\mathrm{strict}}} \int \cL(x,\cR(x))\,\dd \mu(x)>\mathrm{err}_d^\cL.
\end{align}
\end{prop}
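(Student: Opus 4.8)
The plan is to deduce \cref{prop:minimal} from \cref{prop:genres} together with a convexity argument showing that genuinely discontinuous generalized responses are strictly suboptimal. First I would verify that the hypotheses of \cref{thm:main} imply those of \cref{prop:genres}. Lower semicontinuity of $y\mapsto\cL(x,y)$ is immediate from the joint continuity in \cref{lem:convex}~(I), and $\lim_{|y|\to\infty}\cL(x,y)=\infty$ follows because a strictly convex function attaining its minimum grows at least linearly away from its minimizer. For niceness of $\mu$ I would use that a continuous density $h\not\equiv 0$ forces $\ID=\supp(\mu)=\overline{\{h>0\}}$ to be full-dimensional, so $\cC:=\mathrm{conv}(\ID)$ has nonempty interior; all hyperplanes are Lebesgue-null hence $\mu$-null, and whenever $\mu(A),\mu(\overline A^c)>0$ the functional $\vecn\cdot(\,\cdot\,)$ attains the value $o$ on $\INT(\cC)$, so $\partial A$ meets $\INT(\cC)$ and the geometric assumption yields a point of $\partial A$ with $h>0$; by continuity $\{h>0\}\cap\partial A$ is relatively open and nonempty, hence not coverable by finitely many other hyperplanes. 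Thus \cref{prop:genres} applies and supplies an optimal $\cR\in\mathfrak R_d$ with error $\overline{\err}_d^{\cL}\le\err_d^{\cL}$, together with attainment of the infimum over $\mathfrak R_d^{\mathrm{strict}}$.

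The core step is the claim that a response $\cR\in\mathfrak R_d$ which is genuinely discontinuous across a breakline $H=\partial A$ carrying positive density is strictly suboptimal. I would fix such a multiplicity-two term $(\delta\cdot x+\mathfrak b)\1_A$ and replace it, exactly as in \cref{rem:genres}, by the continuous two-neuron response $\cR_n$ whose two breaklines sit at perpendicular distance $\sim 1/n$ on either side of $H$. Then $\cR_n\in\mathfrak R_d$ (two multiplicity-one neurons replace one multiplicity-two neuron), it agrees with $\cR$ off a strip of width $\sim 1/n$, and on that strip it interpolates the value linearly between the two one-sided limits of $\cR$ at $v\in H$, which I order as $a(v)\le b(v)$ so that $b(v)-a(v)=|\delta\cdot v+\mathfrak b|$ is the size of the jump. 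Writing the strip integral in slab coordinates $(u,v)$ with $u$ normal to $H$ and passing to leading order in $1/n$ (using continuity of $h$ and of $\cL$), the error difference becomes
\begin{equation}
  \err^\cL(\cR_n)-\err^\cL(\cR)=\frac1n\int_H h(v)\Bigl[\,2\!\int_{a(v)}^{b(v)}\!\cL(v,y)\,\dd y-\bigl(b(v)-a(v)\bigr)\bigl(\cL(v,a(v))+\cL(v,b(v))\bigr)\Bigr]\dd\sigma(v)+o(1/n).
\end{equation}
By strict convexity of $\cL(v,\cdot)$ the chord lies strictly above the graph, so the bracket is $\le 0$ and is $<0$ precisely where $a(v)<b(v)$; since the geometric assumption makes $\{h>0\}\cap H$ of positive surface measure and $\{a=b\}\cap H$ a proper (hence $\sigma$-null) affine subset, the integral is strictly negative and $\err^\cL(\cR_n)<\err^\cL(\cR)$ for large $n$.

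Two further points complete the existence statement. If a discontinuity of the optimal $\cR$ lies on a breakline $H$ missing $\INT(\cC)$, then $\cC$ sits on one side of $H$, one of $\mu(A),\mu(\overline A^c)$ vanishes, and the offending term equals an affine function $\mu$-almost everywhere; removing it alters $\cR$ only on a $\mu$-nullset and lowers the dimension, so such discontinuities are invisible and removable. Combining this with the core claim, the minimizer from \cref{prop:genres} has no essential discontinuity and may be taken simple, i.e.\ continuous; it is then the response of a network $\IW\in\cW_d$, whence $\overline{\err}_d^{\cL}=\err^\cL(\IW)\ge\err_d^{\cL}$ and therefore $\err^\cL(\IW)=\err_d^{\cL}=\overline{\err}_d^{\cL}$.

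For the strict inequality, let $\cR^\dagger$ attain $\inf_{\mathfrak R_d^{\mathrm{strict}}}$ (again via \cref{prop:genres}). If $\cR^\dagger$ has dimension at most $d-1$, then $\err^\cL(\cR^\dagger)\ge\overline{\err}_{d-1}^{\cL}=\err_{d-1}^{\cL}>\err_d^{\cL}$ by the existence part applied to $d-1$. If instead $\cR^\dagger$ is discontinuous, the core claim produces $\cR_n\in\mathfrak R_d$ with $\err^\cL(\cR^\dagger)>\err^\cL(\cR_n)\ge\overline{\err}_d^{\cL}=\err_d^{\cL}$. In both cases $\err^\cL(\cR^\dagger)>\err_d^{\cL}$, which is \cref{eq8246}. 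I expect the main obstacle to be the core step: both the gain on the strip and any change elsewhere are of the same order $1/n$, so one must extract the leading coefficient exactly and argue that it is strictly negative through the strict-convexity chord inequality, while carefully invoking continuity of $\cL$ and $h$ and the geometric assumption to ensure the breakline genuinely carries mass.
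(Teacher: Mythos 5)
Your proposal is correct in its essential mechanism and it is, at its core, the same argument as the paper's: reduce to generalized responses via \cref{prop:genres} (with the identical verification of niceness, semicontinuity, and growth), smooth a genuine jump across a breakline by a steep two-neuron interpolation, and show that the leading-order coefficient of the error change is strictly negative by the chord inequality for the strictly convex loss, the strict sign coming from the fact that $\{h>0\}\cap H$ has positive surface measure while the set where the jump vanishes is a proper affine (hence null) subset of $H$. Your bracket $2\int_a^b\cL(v,y)\,\dd y-(b-a)\bigl(\cL(v,a)+\cL(v,b)\bigr)$ is exactly the paper's quantity $Q(x')$. Two organizational differences are genuine and both simplify matters: (a) you use the single construction of \cref{rem:genres} uniformly, whereas the paper first normalizes by an affine change of coordinates (\cref{le:affinetrafo}) and then runs two separate smoothing constructions according to whether $\delta_K$ and $\mathfrak{n}_K$ are linearly independent or not; (b) you avoid the paper's induction over $d$ entirely -- the paper works with the best \emph{strict} response and a dichotomy of estimates, needing the induction hypothesis to convert $\overline{\mathrm{err}}^{\cL}_{d-1}$ into $\mathrm{err}^{\cL}_{d-1}$, while you argue directly that an optimal response in $\mathfrak R_d$ can be made simple by contradiction plus removal of invisible jumps, and then quote the (already proved, for every $d$) existence statement at $d-1$, $d-2$ where needed.

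There are two gaps, both fixable. First, your case analysis for \cref{eq8246} is incomplete: a minimizer $\cR^\dagger\in\mathfrak R_d^{\mathrm{strict}}$ may be discontinuous only across breaklines that \emph{miss} the interior of the convex hull of $\ID$, and then your core claim is silent; this sub-case must be closed with your own removal argument, which gives $\int\cL(x,\cR^\dagger(x))\,\dd\mu(x)\ge\overline{\mathrm{err}}^{\cL}_{d-2}=\mathrm{err}^{\cL}_{d-2}\ge\mathrm{err}^{\cL}_{d-1}>\mathrm{err}^{\cL}_d$. Second, the expansion $\err^{\cL}(\cR_n)-\err^{\cL}(\cR)=\frac1n\int_H(\cdots)\,\dd\sigma(v)+o(1/n)$ is asserted rather than proved, and it is the technical heart of the proof: the cross-sectional comparison fails near points where the \emph{other} breaklines $\partial A_k$, $k\ne K$, cross the strip, since there the map $x\mapsto\tilde\cL(x,y)$ is not continuous and only the trivial $O(1/n)$ bound is available -- dangerous precisely because, as you note, all competing effects are of order $1/n$. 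The paper resolves this by splitting the strip into $D'_{\kappa,\eps}$ (cross-sections at distance at least $\eps$ from the other breaklines, where uniform continuity on compacts yields the exact leading coefficient) and $D''_{\kappa,\eps}$ (whose contribution is at most $C\,|\cD\setminus\cD_\eps|/\kappa$ with $|\cD\setminus\cD_\eps|\to0$ as $\eps\to0$), followed by a diagonalization in $\eps$; some argument of this type must be supplied to legitimize your $o(1/n)$.
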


\begin{proof} 
We can assume without loss of generality that $ \mu \neq 0 $.
First we verify the assumptions of \cref{prop:genres} 
in order to conclude that there are generalized responses $ \cR $ 
of dimension $ d $ for which
\begin{equation}
  \int \cL(x,\cR(x))\,\dd \mu(x)=\overline{\mathrm{err}}_d^\cL .
\end{equation}
We verify that $ \mu $ is a nice measure: 
In fact, since $ \mu $ has Lebesgue-density $ h $, 
we have $ \mu( H ) = 0 $ for all hyperplanes $ H \subseteq \R^{ d_{ \mathrm{in} } } $. 
Moreover, for every half-space $A$ with $\mu(A),\mu(\overline A^c)>0$ we have that $\partial A$ intersects the interior of the convex hull of $\ID$ so that there exists a point $x \in \partial A$ with $h(x)>0$. Since $ \{x \in \R^{ d_{ \mathrm{in} } } \colon h(x)>0\} $ is an open set, $ \{x \in \partial A: h(x)>0\} $ cannot be covered by finitely many hyperplanes different from $\partial A$. Moreover, since for all $x \in \R^{ d_{ \mathrm{in} } }$ the function $y \mapsto \cL(x,y)$ is strictly convex and attains its minimum we clearly have for fixed $x\in\R^{ d_{ \mathrm{in} } }$ continuity of $y \mapsto \cL(x,y)$ and 
	\begin{align}
	\lim\limits_{|y| \to \infty} \cL(x,y) =\infty.
	\end{align}
	
	We prove the remaining statements via induction over the dimension $d$.
	If $d\le 1$, all generalized responses of dimension $d$ are representable by a neural network and we are done. Now let $d\ge 2$ and suppose that $\cR$ is the best \emph{strict} generalized response at dimension $d$. It suffices to show that one of the following two cases enters: one has 
	\begin{align}\label{eq8362-1}
	\int\cL(x,\cR(x))\,\dd\mu(x) \ge \overline{\mathrm{err}}_{d-1}^\cL
	\end{align} or
	\begin{align}\label{eq8362-2}
	\int\cL(x,\cR(x))\,\dd\mu(x)> \overline{\mathrm{err}}_d^\cL.
	\end{align}
	Indeed, then in the  case that (\ref{eq8362-2}) does not hold we have  as consequence of (\ref{eq8362-1})
\begin{equation}
  \overline{\mathrm{err}}_{d-1}^\cL \le \int\cL(x,\cR(x))\,\dd\mu(x) =\overline{\mathrm{err}}_d^\cL 
\end{equation}
	and the induction hypothesis entails that  $\mathrm{err}_{d-1}^\cL=\overline{\mathrm{err}}_{d-1}^\cL \le \overline{\mathrm{err}}_{d}^\cL\le \mathrm{err}_{d}^\cL\le \mathrm{err}_{d-1}^\cL$ so that $ \mathrm{err}_{d}^\cL= \overline{\mathrm{err}}_{d}^\cL$ and $\mathrm{err}_{d}^\cL= \mathrm{err}_{d-1}^\cL$. 
	Thus, an optimal simple response $\cR$ of dimension $d-1$ (which exists by induction hypothesis) is also optimal when taking the minimum over all generalized responses of dimension $d$ or smaller. 
	Conversely, if~(\ref{eq8362-2}) holds, an optimal generalized response (which exists by  Proposition~\ref{prop:genres}) is simple so that, in particular, $\mathrm{err}_{d}^\cL= \overline{\mathrm{err}}_{d}^\cL$. 
	This shows that there always exists an optimal simple response.
	Moreover, it also follows that in the case where $\mathrm{err}_{d}^\cL< \mathrm{err}_{d-1}^\cL$, either of the properties~(\ref{eq8362-1}) and~(\ref{eq8362-2})  entail property (\ref{eq8246}).
	
	Suppose that $\cR$  is given by
\begin{equation}
  \cR(x) 
  = 
  \mathfrak{a}( x ) 
  + 
  \sum_{ k = 1 }^K 
  \bigl( \delta_k \cdot x + \mathfrak{b}_k \bigr)
  \1_{ A_k }( x ) 
  ,
\end{equation}
with $A_1,\dots,A_K$ being the pairwise different activation areas and $m_1,\dots,m_K$ being the respective multiplicities.
Note that $\cR$ has to be discontinuous, because otherwise $\cR$ 
is of dimension strictly smaller than d and \cref{eq8362-1} holds. 
Therefore, we can assume without loss of generality that $m_K=2$ and
\begin{equation}
  \partial A_K 
  \not \subseteq 
  \{ x \in \R^{ d_{ \mathrm{in} } } \colon \delta_K \cdot x + \mathfrak b_K = 0 \}
\end{equation}
(otherwise we reorder the terms appropriately). 

If $\partial A_K$ does not intersect the interior of $\ID$, then one can replace the term $ \1_{A_K}(x) \bigl(\delta_{K}\cdot x+\mathfrak b_{K}\bigr)$ by
$
  \delta_K \cdot x + \mathfrak{b}_K 
$ 
or 
$ 0 $ 
without changing the error on $ \ID $. 
By doing so the new response has dimension $d-2$ or smaller. Thus, we get that
\begin{equation}
  \int_{ \ID }
  \cL( x, \cR( x ) ) \, 
  \dd\mu( x ) 
  \ge 
  \overline{\mathrm{err}}_{ d - 2 }^{ \cL }
  .
\end{equation}

Now suppose that $ \partial A_K $ intersects the interior of $ \ID $.
We prove that $ \cR $ is not an optimal response in $ \mathfrak{R}_d $ 
by constructing a better response. 
To see this we apply an appropriate affine transformation on the coordinate mapping. 
For an invertible matrix 
$ 
  B \in \R^{ d_{\mathrm{in}} \times d_{\mathrm{in}} } 
$ 
and a vector $ c \in \R^{ d_{ \mathrm{in} } } $ 
we consider the invertible affine mapping
\begin{equation}
  \varphi( x ) = B( x + c ) .
\end{equation}
By \cref{le:affinetrafo} the (strict) generalized responses are invariant under right applications 
of bijective affine transformations so that $ \hat{\cR} = \cR \circ \varphi $ 
is an optimal strict generalized response
for the loss function $ \hat{\cL} $ given by 
$ 
  \hat{\cL}( x, y ) = \cL( x, \varphi^{ - 1 }( y ) ) 
$.

Now we distinguish two cases. 
In line with the notation from before we denote by 
$ \mathfrak{n}_K \in \mathbb S^{d_{ \mathrm{in}-1}} $ and $ o_K \in \R $ 
the unique values for which 
$
  A_K = \{ x \in \R^{ d_{ \mathrm{in} } } \colon \mathfrak{n}_K \cdot x > o_K \} 
$. 
First suppose that $ \delta_K $ and $ \mathfrak{n}_K $ are linearly independent. 
We choose a basis $ \tau_1, \dots, \tau_{d_{\mathrm{in}}} $ of 
$ \R^{ d_{ \mathrm{in} } } $ such that 
$
  \tau_1 \cdot \mathfrak{n}_K = 1 
$, 
$ 
  \tau_1 \perp \delta_K 
$,  
and 
for 
$ 
  \forall \, l \in ( \N \cap [ 1, d_{\mathrm{in}} ] ) \backslash \{ 1 \} \colon
  \tau_l \perp \mathfrak{n}_K 
$. 
This can be achieved by first choosing an arbitrary basis $\tau_2,\ldots,\tau_{d_\mathrm{in}}$ of the space of  vectors being orthogonal to $\mathfrak{n}_K$, secondly choosing a vector $\tau'_1$  that is orthogonal to $\delta_K$ but not to $\mathfrak{n}_K$ which is possible since $\delta_K$ and   $\mathfrak{n}_K$ are linearly independent  and finally letting $\tau_1=_1'/(\tau_1'\cdot \mathfrak{n}_K)$. 

We denote by $ B $ 
the matrix 
$ 
  ( \tau_1, \dots, \tau_{ d_{\mathrm{in}} } ) 
$ 
consisting of the basis vectors and choose $ c \in \R^{ d_{ \mathrm{in} } } $ 
so that
\begin{equation}
  (B^\dagger \mathfrak{n}_K)\cdot c=o_K\text{ \ and \ } (B^\dagger \delta_K)\cdot c=-\mathfrak b_K.
\end{equation}
The latter is feasible since the expression on the left hand side only depends 
on the choice of $ c_1 $ and the expression on the right hand side only 
on the coordinates $ c_2, \dots, c_{d_\mathrm{in}} $.

As is straight-forward to verify the respective response $ \hat{\cR} $ has as $ K $th active area 
$ \hat{A}_K = \{ x \in \R^{ d_{ \mathrm{in} } } \colon x_1 > 0 \} $ 
and on $ \hat{A}_K $ the $ K $th summand in the respective representation 
of $ \hat{\cR} $ is
\begin{equation}
  \delta_K \cdot \varphi( x ) + \mathfrak{b}_K 
  = 
  ( \underbrace{ B^{ \dagger } \delta_K }_{ = \colon \hat{\delta}_K } ) \cdot x 
  .
\end{equation}
Altogether the previous computations show that 
we can assume without loss of generality that the considered strict generalized response 
has as active area $ A_K = \hat{A}_K $ with $ \delta_K $ 
being perpendicular to the first unit vector and $ \mathfrak{b}_K $ being zero.   

We compare the performance of the response $ \cR $ 
with the $ \kappa $-indexed family of generalized responses 
$
  ( \cR^{ \kappa } \colon \kappa \ge 1 ) 
$ 
of dimension $ d $ or smaller given by
\begin{equation}
  \cR^\kappa(x) 
  = 
  \mathfrak a(x)+ \sum_{k=1}^{K-1} \1_{A_k}(x) \bigl(\delta_{k}\cdot x+\mathfrak b_{k}\bigr) +\tilde \cR^\kappa(x),
\end{equation}
where 
\begin{equation}
  \tilde\cR^\kappa(x)= \sfrac 12 (\delta_K\cdot x + \kappa x_1)^{ + }-\sfrac 12 (-\delta_K\cdot x + \kappa x_1)^{ + }.
\end{equation}
Let 
\begin{equation} 
\label{eq74829}
\textstyle 
  \tilde{\cL}(x,y)
  =
  \cL\bigl( x, 
    \mathfrak a(x)
    + 
    \sum_{ k = 1 }^{ K - 1 } 
    ( \delta_k \cdot x + \mathfrak{b}_k )
    \1_{A_k}(x) 
    + y
  \bigr)
  ,
\qquad 
  \tilde{\cR}( x ) 
  =
  \1_{ A_K }( x ) 
  ( \delta_K \cdot x ) ,
\end{equation}
$
  \delta_K' = ( \delta_{ K, 2 }, \dots, \delta_{ K, d_{\mathrm{in}} } )^{ \dagger } 
$ 
and similarly 
$
  x' = ( x_2, \dots, x_{ d_\mathrm{in} } )^{ \dagger } 
$ 
and note that
\begin{equation}
  \tilde{\cR}^{\kappa}( x ) 
  =
  \begin{cases} 
    \tilde\cR(x) ,
  & 
    \text{ if } 
    \kappa|x_1|\ge |\delta_K'\cdot x'|,
  \\
    \text{lin.\ interpol.\ of $\tilde \cR$ between $\check x^{(\kappa)}$ and $\hat x^{(\kappa)}$,}
  & 
    \text{ otherwise,}
  \end{cases}
\end{equation}
where 
\begin{equation}
  \check x^{(\kappa)}
  =
  \begin{pmatrix} -\frac 1\kappa |\delta_K'\cdot x'|\\ x'\end{pmatrix}\text{ \  and \ } \hat x^{(\kappa)}
  =
  \begin{pmatrix} \frac 1\kappa |\delta_K'\cdot x'|\\ x'\end{pmatrix}
  .
\end{equation}
For $ \eps > 0 $ denote by $ \cD_\eps $ the set
\begin{align*}
  \cD_\eps=\Bigl\{x'\in \R^{d_\mathrm{in}-1} \colon \ &\text{the segment $\Bigl[\begin{pmatrix} -\eps \\ x'\end{pmatrix},\begin{pmatrix} \eps \\ x'\end{pmatrix}\Bigr]$ has distance greater or equal to $\eps$ to every $\partial A_k$ ($k\not=K$)}\\
&
  \text{and there exists $x_0\in\R$ so that $(x_0,x')\in \ID$} \Bigr\}
\end{align*}
It is straight-forward to verify that $\cD_\eps$ is closed and due to the compactness of $\ID$ also compact. 
Moreover, 
\begin{equation}
  \bigcup_{\eps>0} \cD_\eps=\underbrace{ \bigl\{x'\in \R^{d_{\mathrm{in}-1}} \colon  \bigl[\exists x_0\in \R: (x_0,x')\in \ID\bigr] \text{ and } \bigl[\forall k\not=K: (0,x') \not \in \partial A_k\bigr] \bigr\}}_{= \colon \cD}.
\end{equation}
We have
	\begin{align}\begin{split}\label{eq84782}
		\int _\ID  \cL(&x,\cR^\kappa(x)) \,h(x)\, dx -\int _\ID   \cL( x ,  \cR(x)) \,h(x)\, dx\\
		&=		\int _\ID  \tilde \cL(x,\tilde \cR^\kappa(x)) \,h(x)\, dx -\int _\ID  \tilde \cL( x , \tilde \cR(x)) \,h(x)\, dx\\
			&=\int _{D_\kappa} \bigl(\tilde \cL(x,\tilde \cR^\kappa(x))- \tilde \cL(x,\tilde \cR(x))\bigr) \,h(x)\, dx,
	\end{split}\end{align}
	where $D_\kappa= \Bigl\{ \begin{pmatrix}x_1\\x'\end{pmatrix} \in \R^{ d_{ \mathrm{in} } } \colon  x'\in \cD, |x_1|\le \sfrac 1{\kappa} |\delta_K'\cdot x'|\Bigr\} $. In dependence on $\eps>0$ we partition $D_\kappa$ 
in two sets
\begin{equation}
  D'_{\kappa,\eps}= \Bigl\{ \begin{pmatrix} x_1\\ x'\end{pmatrix} \in D_\kappa: x'\in \cD_\eps\Bigr\} 
\qquad 
  \text{and} 
\qquad 
  D''_{\kappa,\eps}= \Bigl\{ \begin{pmatrix} x_1\\ x'\end{pmatrix} \in D_\kappa: x'\in \cD\backslash  \cD_\eps\Bigr\}.
\end{equation}
	We note that $\tilde\cL$ is continuous on $([-\eps,\eps]\times \cD_\eps)\times \R^{ d_{ \mathrm{in} } }$ as consequence of the continuity of $\cL$ (see Lemma~\ref{lem:convex}) and the particular choice of $\cD_\eps$. Moreover, for sufficiently large $\kappa$ (depending on $\eps$), $D_{\kappa,\eps}'\subseteq [-\eps,\eps]\times \cD_\eps$. 
	Using that continuous functions are  uniformly continuous on compacts, that $h$ is uniformly bounded and the Lebesgue measure of $D_{\kappa}$ is of order  $\cO(1/\kappa)$ we conclude that in terms of $\Upsilon_{x'} :=|\delta_K'\cdot x'|$
\begin{equation}
\begin{split}
\label{eq578}
&
  \int_{ D_{ \kappa, \eps }' } 
  \bigl(\tilde \cL(x,\tilde \cR^\kappa(x))- \tilde \cL(x,\tilde \cR(x))\bigr) \,h(x)\, dx 
\\
&
  = \int_{D_{\kappa,\eps}'} \Bigl(\tilde \cL\Bigl(\begin{pmatrix}0\\ x'\end{pmatrix},\tilde \cR^\kappa(x)\Bigr)- \tilde \cL\Bigl(\begin{pmatrix}0\\ x'\end{pmatrix},\tilde \cR(x)\Bigr)\Bigr) \,h\Bigl(\begin{pmatrix}0\\ x'\end{pmatrix}\Bigr)\, dx+ o\bigl(\sfrac 1\kappa\bigr)\\
				&=\int _{\cD_\eps} \Bigl(\int_{-\frac 1\kappa\Upsilon_{x'}}^{\frac 1\kappa\Upsilon_{x'}} \Bigl(\tilde \cL\Bigl(\begin{pmatrix}0\\ x'\end{pmatrix},\tilde \cR^\kappa\Bigl(\begin{pmatrix}x_1\\ x'\end{pmatrix}\Bigr)\Bigr)- \tilde \cL\Bigl(\begin{pmatrix}0\\ x'\end{pmatrix},\tilde \cR\Bigl(\begin{pmatrix}x_1\\ x'\end{pmatrix}\Bigr)\Bigr)\Bigr)\,dx_1 \Bigr) \,h\Bigl(\begin{pmatrix}0\\ x'\end{pmatrix}\Bigr)\, dx'+ o\bigl(\sfrac 1\kappa\bigr)
\end{split}
\end{equation}
as $\kappa\to\infty$. Moreover, for every $x'\in \cD_\eps$ one has 
\begin{equation}
\begin{split}
		\int_{-\frac 1\kappa \Upsilon_{x'}}^{\frac 1\kappa\Upsilon_{x'}}& \Bigl(\tilde \cL\Bigl(\begin{pmatrix}0\\ x'\end{pmatrix},\tilde \cR^\kappa\Bigl(\begin{pmatrix}x_1\\ x'\end{pmatrix}\Bigr)\Bigr)- \tilde \cL\Bigl(\begin{pmatrix}0\\ x'\end{pmatrix},\tilde \cR\Bigl(\begin{pmatrix}x_1\\ x'\end{pmatrix}\Bigr)\Bigr)\Bigr)\, dx_1\\
		&=\frac 1\kappa \int_{- \Upsilon_{ x'}}^{\Upsilon_{ x'}} \Bigl(\tilde \cL\Bigl(\begin{pmatrix}0\\ x'\end{pmatrix},\tilde \cR^1\Bigl(\begin{pmatrix}x_1\\ x'\end{pmatrix}\Bigr)\Bigr)- \tilde \cL\Bigl(\begin{pmatrix}0\\ x'\end{pmatrix},\tilde \cR\Bigl(\begin{pmatrix}x_1\\ x'\end{pmatrix}\Bigr)\Bigr)\Bigr)\, dx_1 = \colon \frac 1\kappa\, Q (x'). 
\end{split}
\end{equation}
We represent $Q(x')$  in terms of the measure $ \nu_{x'}=\mathrm{Leb} |_{[- \Upsilon_{ x'}, \Upsilon _{ x'}]}$, the strictly convex function
\begin{equation}
  \xi_{x'} \colon [-\Upsilon_{ x'}, \Upsilon_{x'}] \to[0,\infty), \ x_1\mapsto \tilde \cL\Bigl(\begin{pmatrix}0
\\ 
  x'\end{pmatrix},\tilde \cR^1\Bigl(\begin{pmatrix}x_1\\ x'\end{pmatrix}\Bigr)\Bigr)
\end{equation}
 and its secant $ \bar \xi_{x'} \colon [-\Upsilon_{ x'}, \Upsilon_{x'}]\to[0,\infty) $ 
 that equals $ \xi_{x'} $ 
 in the boundary points and is linear in between. 
 One has
\begin{equation}
  Q( x' ) 
  =
  \int \bigl(\xi_{x'}(z) - \bar  \xi_{x'}(z)\bigr)\, d \nu_{x'} \le 0
\end{equation}
with strict inequality in the case where $ \Upsilon_{ x' } > 0 $ 
(due to strict convexity). 
Consequently, we get with~\cref{eq578} that
\begin{align}
\label{eq9357}
  \lim_{ \kappa \to \infty } \kappa 
  \int_{ D_{ \kappa, \eps }' } 
  \bigl(\tilde \cL(x,\tilde \cR^\kappa(x))- \tilde \cL(x,\tilde \cR(x))\bigr) 
  \, h(x) \, dx
  = 
  \int_{\cD_\eps}    Q(x') \, h\Bigl(\begin{pmatrix}0\\ x'\end{pmatrix}\Bigr)\, dx' 
  .
\end{align}

To analyze the contribution of the integrals on $D_{\kappa,\eps}''$ we note that  by uniform boundedness of $\tilde \cL(x,\tilde \cR^\kappa(x))- \tilde \cL(x,\tilde \cR(x))$ over all $x\in \ID$ and $\kappa\ge1$ one has existence of a constant $C$ not depending on $\kappa$ and $\eps$ such that
\begin{equation}
  \Bigl|\int _{D_{\kappa,\eps}''} \bigl(\tilde \cL(x,\tilde \cR^\kappa(x))- \tilde \cL(x,\tilde \cR(x))\bigr) \,h(x)\, dx\Bigr| \le C \, |\cD\backslash \cD_\eps|\, \sfrac 1\kappa,
\end{equation}
where $|\cD\backslash \cD_\eps|$ is the $(d_{\mathrm{in}}-1)$-dimensional Hausdorff measure of the set $\cD \backslash \cD_\eps$.
By choosing $\eps>0$ arbitrarily small one can make $|\cD\backslash \cD_\eps|$ arbitrarily small and with a diagonalization argument we obtain with~(\ref{eq84782}) and~(\ref{eq9357})  that 
\begin{equation}
  \lim_{\kappa\to\infty} \kappa	\int _{\ID} \bigl( \cL(x, \cR^\kappa(x))-  \cL(x, \cR(x))\bigr) \,h(x)\, dx 
  = 
  \int_{\cD}    Q(x') \, h\Bigl(\begin{pmatrix}0\\ x'\end{pmatrix}\Bigr)\, dx'
  .
\end{equation}

Now there exists $ x' \in \cD $ 
with $ h( x' ) > 0 $ 
and by continuity of $ h $ 
we can choose $ x' $ such that, 
additionally, 
$ 
  \Upsilon_{x'} = |\delta_K'\cdot x'| > 0 
$. 
By continuity we thus get that
\begin{equation}
  \int_{\cD}    Q(x') \, h\Bigl(\begin{pmatrix}0\\ x'\end{pmatrix}\Bigr)\, dx'<0
\end{equation}
and, consequently, there exists $\kappa>0$ such that the generalized response $\cR^\kappa$ of dimension $d$ or smaller has a strictly smaller error than $\cR$. Hence, it has to be simple.

It remains to treat the case where $\delta_K$ and $\mathfrak{n}_K$ are linearly dependent. In that case we choose $\alpha\in\R$ with $\delta_K=\alpha \mathfrak{n}_K$, we extend $\tau_1=\mathfrak{n}_K$ to an orthonormal basis $(\tau_1,\dots,\tau_{d_\mathrm{in}})$ of $\R^{ d_{ \mathrm{in} } }$ and denote by $B$ the matrix formed by the vectors $\tau_1,\dots,\tau_{d_\mathrm{in}}$. Moreover, choose 
$c=(o_K,0,\dots,0)^\dagger$ and set $\varphi(x)=B(x+c)$. Then the response $\hat \cR=\cR\circ \varphi$ has as $K$th activation area
$\hat A_K= \{x\in \R^{ d_{ \mathrm{in} } } \colon  x_1>0\} $ and on $\hat A_K$ the $K$th summand in the respective representation of $\hat \cR$ is
\begin{equation}
  \delta_K\cdot \varphi(x)+\mathfrak b_K= \alpha (x_1+o_K)+\mathfrak b_K= \alpha x_1+\hat {\mathfrak b}_K 
  ,
\end{equation}
where $\hat {\mathfrak b}_K=\alpha o_K+\mathfrak b_K\not=0$ 
since otherwise we would have that
\begin{equation}
  \partial A_K 
  \subseteq \{ x \in \R^{ d_{ \mathrm{in} } } \colon \delta_K \cdot x + \mathfrak b_K = 0 \} .
\end{equation}
We showed that in the remaining case we can assume without loss of generality that $A_K=\hat A_K$, $\delta_K=(\alpha,0,\dots,0)^\dagger$ for an $\alpha\in\R$  and $\mathfrak b_K\neq 0$.

In analogy to above we compare the response $ \cR $ with the $ \kappa $-indexed 
family of responses $ ( \cR^{ \kappa } \colon \kappa \ge 1 ) $ given by
\begin{equation}
  \cR^{ \kappa }(x) 
  = 
  \mathfrak{a}( x ) 
  + 
  \sum_{ k = 1 }^{ K - 1 } 
  \bigl( 
    \delta_k \cdot x + \mathfrak{b}_k 
  \bigr) 
  \1_{ A_k }( x ) 
  +
  \tilde{\cR}^\kappa( x ) 
  ,
\end{equation}
where 
\begin{equation}
  \tilde{\cR}^{ \kappa }( x ) 
  = \sfrac 12 (\alpha + \mathfrak b_K \kappa) \bigl( x_1 + \sfrac 1\kappa )^{ + }
  +
  \sfrac 12 (\alpha - \mathfrak b_K \kappa) \bigl(x_1-\sfrac1\kappa\bigr)^{ + } 
  .
\end{equation}
We use $ \tilde{\cL} $ and $ \tilde{\cR} $ as before, 
see~\cref{eq74829}, 
and note that $ \tilde{\cR}^{\kappa} $ agrees 
with $ \tilde{\cR} $ 
for all $ x \in \R^{ d_{ \mathrm{in} } } $ 
with $ |x_1| \ge \frac 1 \kappa $.
In analogy to above, we conclude that
\begin{equation}
\begin{split}
  \int _{\ID}& \bigl( \cL(x, \cR^\kappa(x))-  \cL(x, \cR(x))\bigr) \,h(x)\, dx\\
& 
  = \int _{D_{\kappa}} \Bigl(\tilde \cL\Bigl(\begin{pmatrix}0\\ x'\end{pmatrix},\tilde \cR^\kappa(x)\Bigr)- \tilde \cL\Bigl(\begin{pmatrix}0\\ x'\end{pmatrix},\tilde \cR(x)\Bigr)\Bigr) \,h\Bigl(\begin{pmatrix}0\\ x'\end{pmatrix}\Bigr)\, dx+ o\bigl(\sfrac 1\kappa\bigr) 
  ,
\end{split}
\end{equation}
where 
$ 
  D_\kappa = [-\frac 1\kappa,\frac 1\kappa] \times \cD 
$.  
As above we split the domain of integration into the two sets 
$ D_{ \kappa, \eps }' $ and $ D_{ \kappa, \eps }'' $. 
Now in terms of 
\begin{equation}
  \xi_{x'} \colon [-1,1] \to[0,\infty), 
  \qquad
  x_1 \mapsto 
  \tilde{\cL}\biggl(
    \begin{pmatrix}
      0 \\ x'
    \end{pmatrix}
    ,  
    \sfrac 12( x_1 + 1 ) \mathfrak{b}_K 
  \biggr)
\end{equation}
we get by using the uniform continuity 
of $\tilde \cL$ on $D_{\kappa,\eps}'$ and 
the fact that $|D_{\kappa,\eps}|=\cO(\frac 1\kappa)$ as $\kappa\to\infty$ that
\begin{equation}
\begin{split}
  \int _{D_{\kappa,\eps}'}& \Bigl(\tilde \cL\Bigl(\begin{pmatrix}0\\ x'\end{pmatrix},\tilde \cR^\kappa(x)\Bigr)- \tilde \cL\Bigl(\begin{pmatrix}0\\ x'\end{pmatrix},\tilde \cR(x)\Bigr)\Bigr) \,h\Bigl(\begin{pmatrix}0\\ x'\end{pmatrix}\Bigr)\Bigr)\, dx
\\
&
  =\frac 1\kappa \int_{\cD_\eps}\int _{-1}^{1} \xi_{x'}(x_1)\,dx_1 -\bigl(\xi_{x'}(-1)+\xi_{x'}(1)\bigr )\,h\Bigl(\begin{pmatrix}0\\ x'\end{pmatrix}\Bigr)\, dx'+ o\bigl(\sfrac 1\kappa\bigr)
  .
\end{split}
\end{equation}
By strict convexity of $\xi_{x'}$, 
we get that $Q(x'):=\int _{-1}^{1} \xi_{x'}(x_1)\,dx_1 -\bigl(\xi_{x'}(-1)+\xi_{x'}(1)\bigr )<0$. 
With the same arguments as in the first case one obtains that
\begin{equation}
  \lim_{\kappa\to\infty} \kappa	\int _{\ID} \bigl( \cL(x, \cR^\kappa(x))-  \cL(x, \cR(x))\bigr) \,h(x)\, dx 
  = 
  \int_{\cD}    Q(x') \, h\Bigl(\begin{pmatrix}0\\ x'\end{pmatrix}\Bigr)\, dx' < 0
\end{equation}
so that there exists a response $ \cR^{ \kappa } $ 
with strictly smaller error than $ \cR $ and the proof is finished.
\end{proof}

\begin{exa} 
\label{exa:counter}
If there exists a hyperplane $ H $ with $ h(x) = 0 $ 
for all $ x \in H $ such that $ H $ intersects 
the convex hull of $ \supp( \mu ) $ the conclusion of \cref{thm:main} 
is in general not true. 
Consider a continuous function $ f \colon \R^2 \to \R $ 
that satisfies $ f(x) = 1 $ for all $ x \in B( (0,1), 1 ) $ 
and $ f(x) = 0 $ for all $ x \in B( (1,-1), 1 ) \cup B( (-1,-1), 1 ) $. 
Now, let $ \cL( x, y ) = ( f(x) - y )^2 $ and $ \mu $ 
be the measure on $ \R^2 $ with continuous Lebesgue density
\begin{equation}
  h(x) = 
  \1_{ B( (0,1), 1 )}( x ) \, | x - (0,1) | 
  + 
  \1_{ B( (1,-1), 1 )}( x ) \, | x - (1,-1) | 
  +
  \1_{ B( (-1,-1), 1 )}( x ) \, | x + (1,1) | 
  .
\end{equation}
Then, we have $ h(x) \equiv 0 $ on $ \R \times \{ 0 \} $. 
Note that
\begin{equation}
  \cR(x) = 
    \begin{cases}
      0, 
    & 
      \text{ if } x_2\leq0 
    \\
      1, 
    & 
      \text{ if } x_2 > 0
    \end{cases} 
\end{equation}
is a strict generalized response of dimension $ 2 $ 
with $ \int \cL( x, \cR( x ) ) \, \dd \mu( x ) = 0 $. 
Conversely, there does not exist a network $ \IW \in \cW_d $ 
(for arbitrary $ d \in \N $) 
with 
$
  \int \cL( x, \mathfrak N^{ \IW }( x ) ) \, \dd \mu ( x ) = 0 
$. 
In particular, assume that there exist 
$ d \in \N $ and $ \IW \in \cW_d $ 
with $ \err^{ \cL }( \IW ) = 0 $. 
Then, every breakline $ H \neq \R \times \{ 0 \} $ 
that intersects the interior of $ D $ contains an uncountable set 
of points $ x \in H $ with $ h(x) > 0 $ and for all such points 
the function $ f $ is constant in a neighborhood of $ x $. 
Therefore, the collective response of the neurons with breakline $ H $ has 
to be constant and we can without loss of generality assume that all ReLU neurons 
have the breakline $ H = \R \times \{ 0 \} $. 
Moreover, since $ \err^{ \cL }( \IW ) = 0 $ 
it holds that $ \mathfrak{N}^{ \IW }( x ) = 0 $ 
for all $ x = ( x_1, x_2 ) \in \R^2 $ with $ x_2 < 0 $ and $ \mathfrak{N}^{ \IW }( x ) = 1 $ 
for all $ x = ( x_1, x_2 ) \in \R^2 $ with $ x_2 > 0 $. 
This contradicts the continuity of $ \mathfrak{N}^{ \IW } $.

Thus, there does not exist a global minimum in the loss landscape 
$ \cW_d \ni \IW \to \err^{ \cL }( \IW ) $ (for arbitrary $ d \in \N $) 
and, in order to solve the minimization task iteratively, 
the sequence of networks returned by a gradient based algorithm 
have unbounded parameters.

For a thorough investigation of the non-existence of global minima 
in the approximation of discontinuous target functions $ f $, 
see \cite{gallonjentzenlindner2022blowup}. 
\begin{figure}
  \includegraphics[width=.3\linewidth]{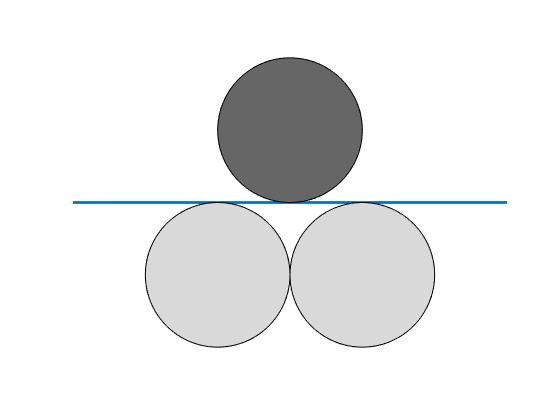}
  \caption{Visualization of the minimization task in \cref{exa:counter}. 
  There exists a generalized response of dimension $ 2 $ but 
  no neural network $\IW \in \cW_d$ ($d \in \N$) attaining zero error.}
\end{figure}
\end{exa}

Next, we show that in many situations if the class of network responses is not able to produce the function $x \mapsto \mathbf m(x)$ defined in Lemma~\ref{lem:convex} the minimal error strictly decreases after adding a ReLU neuron to the network structure.

\begin{prop} 
\label{cor:m} 
Let $ d \in \N_0 $, assume that $ \ID = \supp( \mu ) $ is a compact set, 
assume there exists $ \IW \in \cW_d $ with $ \err^{ \cL }( \IW ) = \err_d^{ \cL } < \infty $,
assume for every $ x \in \ID $ that the function $ \cL( x, \cdot ) $ is convex, 
\begin{enumerate}
\item[(i)] 
assume for every compact $ K \subseteq \R $ there exists $ L \in \R $ such that 
for all $ x \in \ID $, $ y, y' \in K $ that
\begin{equation}
  | \cL( x, y ) - \cL( x, y' ) | \le L | y - y' | ,
\end{equation}
\item[(ii)] 
assume for every affine function $ \varphi \colon \R^{ d_{ \mathrm{in} } } \to \R $ 
that the set
\begin{equation}
  \bigl\{ 
    x \in \ID \colon 
    \cL\text{ is not $y$-differentiable in }(x,\varphi(x))
  \bigr\}
\end{equation}
is a Lebesgue nullset,
\end{enumerate}
and assume that there exist no neural network 
$ \IW \in \cW_d $ satisfying for $ \mu $-almost all $ x \in \ID $ that
\begin{equation}
\textstyle 
  \cL( x, \mathfrak{N}^{ \IW }( x ) ) 
  =
  \inf_{ y \in \R } \cL( x, y ) 
  .
\end{equation}
Then 
\begin{equation}
  \err_d^{ \cL } > \err_{ d + 1 }^{ \cL } 
  .
\end{equation}
\end{prop}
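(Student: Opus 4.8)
The plan is to show that the optimal $d$-neuron response can always be strictly improved by adjoining a single ReLU neuron, unless it already realizes the pointwise minimizer $\mathfrak m$ from \cref{lem:convex}. Fix an optimal $\IW^{\ast}\in\cW_d$ and write $g:=\mathfrak N^{\IW^{\ast}}$, a continuous piecewise affine function; since $\ID$ is compact, $g(\ID)$ lies in a compact interval. For a response $\cR$ put $E(\cR):=\int_{\ID}\cL(x,\cR(x))\,\dd\mu(x)$. On each of the finitely many polyhedral cells on which $g$ agrees with an affine map, assumption (ii) shows that $\cL$ is $y$-differentiable at $(x,g(x))$ outside a Lebesgue nullset, hence (as $\mu\ll\mathrm{Leb}$) outside a $\mu$-nullset; thus $\rho(x):=\partial_y\cL(x,g(x))$ is defined $\mu$-a.e., and by the local Lipschitz bound (i) it is bounded. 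Consequently $\lambda:=\rho\,\dd\mu$ is a finite signed measure on the compact set $\ID$.

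The first step is a directional-derivative computation. For a single admissible neuron $\phi(x)=(\vecn\cdot x-o)^{+}$ the map $t\mapsto\cL(x,g(x)+t\phi(x))$ is convex, its difference quotients are dominated by $L\,|\phi(x)|$ on $|t|\le 1$ by (i), and they converge $\mu$-a.e. to $\rho(x)\phi(x)$; dominated convergence then yields
\[
  \lim_{t\to0}\tfrac1t\bigl(E(g+t\phi)-E(g)\bigr)=\int\phi\,\dd\lambda .
\]
Since $g+t\phi$ is the response of a network in $\cW_{d+1}$ for every $t\in\R$ (the sign of $t$ being absorbed into the added kink), as soon as $\int\phi\,\dd\lambda\neq0$ I may pick the sign of $t$ and $|t|$ small to obtain $E(g+t\phi)<E(g)=\err_d^{\cL}$, whence $\err_{d+1}^{\cL}<\err_d^{\cL}$. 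It therefore remains only to exhibit one admissible $\phi$ with $\int\phi\,\dd\lambda\neq0$.

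The second step argues by contradiction: suppose $\int(\vecn\cdot x-o)^{+}\dd\lambda=0$ for all unit $\vecn$ and all $o\in\R$. Differencing this identity in the offset $o$ and letting the increment tend to zero (dominated convergence, $\lambda$ finite) replaces the ReLU by a half-space indicator and gives $\lambda(\{\vecn\cdot x>o\})=0$ for every open half-space. Hence every one-dimensional pushforward of $\lambda$ along $x\mapsto\vecn\cdot x$ vanishes, and by the Cram\'er--Wold/Fourier uniqueness theorem $\lambda=0$, so $\rho=0$ $\mu$-a.e. For $\mu$-a.e.\ $x$ the convex function $\cL(x,\cdot)$ then has vanishing derivative at $g(x)$, so $\cL(x,g(x))=\inf_{y\in\R}\cL(x,y)$; this exhibits $\IW^{\ast}$ as a network realizing the pointwise minimizer, contradicting the hypothesis. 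Thus some admissible $\phi$ has $\int\phi\,\dd\lambda\neq0$, and the first step applies.

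I expect two delicate points. The first is the differentiation under the integral sign: convexity furnishes monotone difference quotients and (i) furnishes an integrable dominating function, but one must check that $\rho$ really is the (two-sided) $y$-derivative $\mu$-almost everywhere, which is exactly where (ii) and the piecewise-affine structure of $g$ are used. The conceptual heart, however, is the completeness statement that the vanishing of all ReLU integrals forces $\lambda=0$; reducing it first to half-space measures and then to Cram\'er--Wold is the cleanest route, the only care being that the boundary hyperplane contributes nothing in the limiting step.
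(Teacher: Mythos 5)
Your proposal is correct, and its skeleton coincides with the paper's proof: both perturb the optimal response $g=\mathfrak N^{\IW}$ by a single ReLU neuron $\Delta\,(\vecn\cdot x-o)^{+}$, justify differentiating the error at $\Delta=0$ under the integral sign using (i) for domination and (ii) plus the piecewise affine structure of $g$ for $\mu$-a.e.\ two-sided differentiability, arrive at the first-order condition that the bounded density $\rho(x)=\partial_y\cL(x,g(x))$ integrates to zero against every ReLU ridge function, deduce that $\rho$ vanishes $\mu$-a.e., and then use convexity (zero derivative at a point implies global minimality) to contradict the hypothesis that no network in $\cW_d$ realizes the pointwise minimizer. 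The genuine difference is in the completeness step. The paper shows that the annihilator $\cH=\{g\in C(\ID,\IC)\colon \int g(x)\,\rho(x)h(x)\,dx=0\}$ is a closed subspace containing all ReLU ridge functions, hence all polynomials, hence by Stone--Weierstrass all of $C(\ID,\IC)$, and concludes via the Riesz--Markov--Kakutani representation theorem that $\rho h\,dx=0$. You instead difference the ReLU identity in the offset $o$ and pass to the limit (dominated convergence, the finite signed measure $\lambda=\rho\,d\mu$) to obtain $\lambda(\{\vecn\cdot x>o\})=0$ for every open half-space, whence every one-dimensional pushforward of $\lambda$ vanishes and $\lambda=0$ by Cram\'er--Wold / Fourier uniqueness. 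Both routes are sound; yours is somewhat more self-contained, since it bypasses the unproved ``standard'' passage from ReLUs to polynomials as well as the appeal to Stone--Weierstrass and Riesz--Markov--Kakutani, while the paper's closed-subspace argument adapts more readily to other activation families whose span is dense in $C(\ID,\IC)$. One point worth keeping explicit in your write-up: the proposition is read, as in the paper's own proof (which writes $h(x)\,dx$ throughout), under the standing assumption $\mu\ll\mathrm{Leb}$; you correctly flag that this is exactly what converts the Lebesgue nullset in (ii) into a $\mu$-nullset, and it is also what makes your measure $\lambda$ well defined as an object to which Fourier uniqueness applies.
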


\begin{rem}
We compare the assumptions of \cref{cor:m} 
with those of \cref{thm:main}. 
In \cref{cor:m} we explicitly assume 
the existence of an optimal network $ \IW \in \cW_d $ and 
relax the continuity assumption on $ \cL $ in the first component 
and the strict convexity assumption on $ \cL $ in the second component. 
On the other hand, we introduce assumptions on the smoothness of $ \cL $ in the second component. 
Under the assumptions of \cref{thm:main}, condition (i) of \cref{cor:m} is satisfied 
(cf.\ \cite[Thm. 10.6]{rockafellar1970convex}) and we can apply the latter proposition 
if additionally condition (ii) is satisfied. In that case, the statement of \cref{cor:m} 
can be rewritten as follows: 
if $ \err_{d}^{ \cL } \le \err_{ d + 1 }^{ \cL } $, then there exists a neural network 
$ \IW \in \cW_d $ with $ \mathfrak{N}^{ \IW }( x ) = \mathfrak{m}( x ) $ for all $ x \in \ID $.
\end{rem}

\begin{proof}[Proof of \cref{cor:m}]
Let $ \IW \in \cW_d $ be a network with $ \err^{ \cL }( \IW ) = \err_d^{ \cL } $.
For $ \Delta, o \in \R $, $ \vecn \in \mathbb S^{d_{ \mathrm{in}-1}} $ consider the function
\begin{equation}
  N(\Delta, \vecn, o)(x) = \mathfrak N^{\IW}(x) + \Delta (\vecn\cdot x-o)^{ + }
  .
\end{equation}
If $ \err_d^{ \cL } \le \err_{ d + 1 }^{ \cL } $ then 
we have for all $ \Delta, o \in \R $, $ \vecn \in \mathbb S^{d_{ \mathrm{in}-1}} $ that
\begin{equation}
  \int_{\ID} \cL(x, \mathfrak N^{\IW}(x))\, h(x) \, dx 
  \le 
  \int_{\ID} \cL(x, N(\Delta, \vecn, b)(x)) \,h(x) \, dx
\end{equation}
and taking the derivative with respect to $ \Delta $ at $ \Delta = 0 $ yields
\begin{equation} 
  \int_{\ID} 
  \Bigl( 
    \frac{ \partial }{ dy }
    \cL( x, \mathfrak{N}^{ \IW }( x ) ) 
  \Bigr) 
  ( \vecn \cdot x - o )^{ + } \, h(x) \, dx 
  = 0, 
\end{equation}
where 
$ 
  x \mapsto \frac{ \partial }{ dy } \cL( x, \mathfrak{N}^{ \IW }( x ) )
$ 
is uniformly bounded and 
well-defined outside a Lebesgue nullset. 
Indeed, since $ \mathfrak{N}^{ \IW } $ is piecewise affine 
there exists a finite number of affine functions 
$ 
  \varphi_1, \dots, \varphi_m 
$ 
such that the set of points $ x \in \ID $ for which $ \cL $ is not $ y $-differential 
in $ ( x, \mathfrak{N}^{ \IW }( x ) ) $ 
is contained in 
\begin{equation}
  \bigcup_{ i = 1 }^m
  \{ 
    x \in \ID \colon \cL \text{ is not $ y $-differentiable in } (x,\varphi_i(x)) 
  \} 
  ,
\end{equation}
which by (ii) is a nullset. Moreover, the boundedness of the derivative follows 
from the Lipschitz continuity of $ \cL $ in the second argument, see (i). 
We let 
\begin{equation}
  \tilde h(x) := \Bigl( \frac{\partial}{\partial y}\cL(x,\mathfrak N^{\IW}(x)) \Bigr) h(x)
\end{equation}
and note that the space $ \cH $ of all continuous functions 
$ g \colon \ID \to \IC $ satisfying
\begin{equation}
  \int_\ID g(x) \,\tilde h(x) \, dx=0
\end{equation}
is linear and closed under convergence in $ C( \ID, \IC ) $ 
(endowed with supremum norm). 
We showed that $ \cH $ contains all functions 
of the form $ x \mapsto ( \vecn \cdot x - o )^{ + } $ and 
it is standard to deduce that $ \cH $ contains 
all polynomials and, using the Stone-Weierstrass theorem, 
thus all continuous functions. By the Riesz--Markov--Kakutani representation theorem, 
the measure $ \tilde{h}( x ) \, dx $ is the zero-measure and 
$ \tilde{h} $ is zero except for $ \mu $-nullsets. 
Note that $ h > 0 $, $ \mu $-almost everywhere, and 
hence 
$
  \frac \partial{ \partial y } \cL( x,\mathfrak N^{\IW}(x))=0
$, 
$ \mu $-almost everywhere. 
Using the convexity of $y \mapsto \cL(x,y)$ we get $\cL(x,\mathfrak N^{\IW}(x))=\inf_{y \in \R} \cL(x,y)$, $\mu$-almost everywhere.
	\end{proof}

In the next example, we show that the conclusion of Proposition~\ref{cor:m} is in general false if condition (ii) is not satisfied. We note that the loss function $\cL$ in the example is not strictly convex but the statements of Lemma~\ref{lem:convex} still hold in this case.

\begin{exa} 
\label{exa:counter2}
Consider the following regression problem. 
Let $d_{\mathrm{in}}=1$, $\ell \ge 1$, $\mu = \mathrm{Leb} |_{[-\ell-1,1+\ell]}$ 
be the Lebesgue measure on the interval $[-\ell-1,1+\ell]$ and $\cL(x,y)=|y-f(x)|$ where
\begin{equation}
  f(x)= \left\{ \begin{matrix} 1-|x|, & \text{ if } |x|\le 1 \\ 0, & \text{ if } |x|>1 \end{matrix} \right. .
\end{equation}
Note that $\lambda(\{x \in [-\ell-1, \ell+1]: \cL \text{ is not $y$-differentiable in } (x,0)\})=2 \ell>0$ and 
\begin{equation}
  f(x) = (x+1)^{ + } -2(x)^{ + } +(x-1)^{ + }
\end{equation}
so that $ f $ is the response of a network using three ReLU neurons and $ \err_3^{ \cL } = 0 $. 
We show that for $ \ell \ge 13 $ we have
\begin{equation}
  \err_0^\cL=\err_1^{\cL} = \err_2^{\cL} = \int_{-\ell-1}^{\ell+1} |f(x)| \, dx = 1 ,
\end{equation}
i.e. the best regression function in the set of response functions for networks having $ 2 $ ReLU neurons is the zero function although 
$
  \inf_{ y \in \R } \cL( x, y ) = f(x) 
$ 
is not the zero function. 
This shows that the conclusion of \cref{cor:m} is in general false if Assumption~(ii) 
is not satisfied.

Denote by $ E \colon \cW_2 \times \II \to \R $ the function
\begin{equation}
  ( \IW, [s,t] ) \mapsto E( \IW, [s,t] ) 
  = 
  \int_s^t 
  | \mathfrak{N}^{ \IW }( x ) - f(x) | - | f(x) | \, dx ,
\end{equation}
where $ \II $ is the set of all closed intervals that are subsets of $ [ - \ell - 1, \ell + 1 ] $. 
We show that for $ \ell \ge 13 $ we have $ E( \IW, [-\ell-1, 1 + \ell ] ) \ge 0 $ 
for all networks $ \IW \in \cW_2 $.

Let $ \IW \in \cW_2 $. 
Then $ \mathfrak{N}^{ \IW } $ 
is continuous and satisfies
\begin{equation}
  \mathfrak{N}^{ \IW }( x ) 
  = 
  \left\{ \begin{matrix} \delta_1  x + \mathfrak b_1, & \text{ if } x \le o_1 \\ \delta_2  x + \mathfrak b_2,& \text{ if } o_1 < x < o_2 \\ \delta_3  x + \mathfrak b_3,& \text{ if } x > o_2 \end{matrix} \right. 
  ,
\end{equation}
for $ \delta_1, \delta_2, \delta_3, \mathfrak{b}_1, \mathfrak{b}_2, \mathfrak{b}_3, o_1, o_2 \in \R $ 
with $ o_1 \le o_2 $.
\begin{figure}
\includegraphics[width=.35\linewidth]{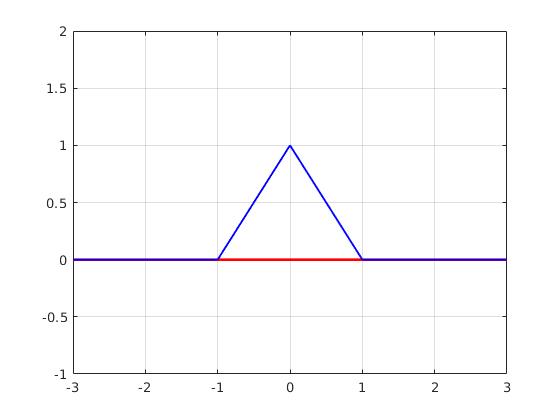}
\caption{Visualization of the minimization task in Example~\ref{exa:counter2}. There exists a network $\IW \in \cW_3$ with $\mathfrak N^\IW=f$ ({\blue blue}). 
However, the realization function attaining minimal error 
in the class $\cW_2 $, $ \cW_1 $, and $ \cW_0 $ 
is $ \mathfrak{N}^{ \IW } = 0 $ ({\red red}).}
\end{figure}

First, we assume that $o_1 \le 0 \le o_2$ and $0\le \mathfrak N^{\IW}(0)$. 
We fix $\delta_2, \mathfrak b_2 \in \R$ and derive a lower bound for $E(\IW,[0,\ell+1])$ over all feasible choices of $o_2, \delta_3$ and $\mathfrak b_3$.
We start with the case $\delta_2<0$. 
Note that the choice $o_2=\mathfrak N^{\IW}(0)/|\delta_2|$, $\delta_3=\mathfrak b_3=0$ yields a better result than all networks with $o_2\ge \mathfrak N^{\IW}(0)/|\delta_2|$. Therefore we can restrict the optimization task to networks satisfying 
$
  o_2 \le \mathfrak N^{\IW}(0)/|\delta_2|.
$
For $\mathfrak N^{\IW}(0)/|\delta_2|\le 1$ we show that, indeed, the optimal choice for the approximation of $f$ on the right-hand side of the $y$-axis is $o_2=\mathfrak N^{\IW}(0)/|\delta_2|$, $\delta_3=\mathfrak b_3=0$. If $o_2<\mathfrak N^{\IW}(0)/|\delta_2|$ and $\mathfrak N^{\IW}(1)\le 0$ then 
\begin{equation}
  E(\IW,[o_2,a])
  \ge - \frac 12 \mathfrak N^{\IW}(o_2)(a-o_2) 
  \qquad 
  \text{and} 
  \qquad 
  E(\IW,[a,\ell+1])\ge \frac 12 \mathfrak N^{\IW}(o_2)(a-o_2),
\end{equation}
where $a \in [0,1]$ with $\mathfrak N^{\IW}(a)=0$. Conversely, if $\mathfrak N^{\IW} (1) \ge 0$ then $E(\IW,[o_2,1])\ge - \mathfrak N^{\IW}(o_2)(1-o_2)\ge - \mathfrak N^{\IW}(o_2)$. We consider two cases. If $\mathfrak N^{\IW}(\ell+1)\ge 0$ then $\delta_3\ge - \mathfrak N^{\IW}(o_2)/\ell$. Thus, $E(\IW,[1,\ell+1])\ge \frac 12 \ell \mathfrak N^{\IW}(1) \ge \frac {\ell-1}{2} \mathfrak N^{\IW}(o_2)$ so that for $\ell \ge 3$ we get $E(\IW, [o_2,\ell+1])\ge 0$. If $\mathfrak N^{\IW}(\ell+1)\le  0$ then $\delta_3 \le - \mathfrak N^{\IW}(o_2)/(\ell+1)$. In that case, $E(\IW,[1,\ell+1])$ corresponds to the area of two triangles with slope $|\delta_3|$ and baselines that add to $\ell$. This is minimized by two congruent triangles so that
\begin{equation}
  E(\IW, [1,\ell+1]) \ge  \Bigl(\frac{\ell}{2}\Bigr)^2 |\delta_3| \ge \frac{\ell-1}{4}\mathfrak N^{\IW}(o_2)
  .
\end{equation}
Thus, 
for $\ell\ge 5$ 
we get $E(\IW, [o_2,\ell+1])\ge 0$ and 
the optimal choice is $o_2 = \mathfrak N^{\IW}(0)/|\delta_2|$, $\delta_3=\mathfrak b_3=0$.

It remains to consider the case $\mathfrak{N}^{ \IW }( 0 ) / | \delta_2 | \ge 1 $. In this case the choice $\delta_3 >0$ is clearly suboptimal and for $o_2 \le 1$ 
we get $ E( \IW, [0,o_2] ) \ge - ( \mathfrak{N}^{ \IW }( 0 ) - \frac 12 | \delta_2 | ) $. 
The above calculations show that 
\begin{equation}
  E( \IW, [o_2,\ell+1] ) 
  \ge \frac{ \ell - 5 }{ 4 } \mathfrak{N}^{ \IW }( o_2 ) 
  \ge \frac{ \ell - 5 }{ 4 }( \mathfrak{N}^{ \IW }( 0 ) - | \delta_2 | )
\end{equation}
so that for $\ell \ge 7$ we have 
$
  E(\IW,[0,\ell+1]) \ge - \frac 12 \mathfrak{N}^{\IW}(0) 
$.  
If $\mathfrak N^{\IW}(0)/|\delta_2| \ge \ell+1$ the choice $o_2>1$ is suboptimal and 
in the case $\mathfrak N^{\IW}(0)/|\delta_2| \le \ell+1$ where $o_2\le 1$ is not the optimal choice 
it is easy to see that actually $o_2=\mathfrak N^{\IW}(0)/|\delta_2|$, $\delta_3=\mathfrak b_3=0$ 
is the best choice. In that case
\begin{equation}
  E(\IW,[0,1]) \ge -(\mathfrak N^{\IW}(0)-\frac 12 |\delta_2|) \quad \text{ and } \quad E(\IW,[1,\ell+1]) \ge \frac 12 (\mathfrak N^{\IW}(0)-|\delta_2|).
\end{equation}
In conclusion, in all of the above cases 
we get for $\ell \ge 7$ that $E(\IW, [0, \ell+1])\ge -\frac 12 \mathfrak N^{\IW}(0)$.

Next, we consider the case $\delta_2 \ge 0$. 
Set $ a := \inf\{x \ge 0 \colon \delta_2x+\mathfrak b_2\ge 1-x\} $. 
Choosing $o_2 > a$ or $\delta_3 \ge 0$ is clearly suboptimal. 
For $0 \le o_2 \le a$ and $\delta_3<0$ note that $E(\IW, [0, 1]) \ge -\mathfrak N^{\IW}(o_2)$. 
Analogously to the case $\delta_2 < 0$ we get
\begin{equation}
  E(\IW, [1,\ell+1]) \ge \frac{\ell-1}{4} \mathfrak N^{\IW}(o_2)
\end{equation}
so that for $\ell \ge 7$
\begin{equation}
  E(\IW, [0, \ell+1]) \ge \frac 12 \mathfrak N^{\IW}(o_2) \ge \frac 12 \mathfrak N^{\IW}(0)
  .
\end{equation}
Now, if $\mathfrak N^{\IW}(0)\le 0$ and $\ell\ge 5$ then 
the above calculations imply that $E(\IW,[-\ell-1,\ell+1])\ge 0$.
Using the symmetry of the problem we showed that 
for all networks $\IW\in \cW_2$ satisfying $o_1\le0\le o_2$ we have that
\begin{equation}
  \int_{-\ell-1}^{\ell+1} \cL(x, \mathfrak N^{\IW}(x)) \, \dd x \ge \int_{-\ell-1}^{\ell+1} \cL(x, 0) \, \dd x
  .
\end{equation}

Using again the symmetry, we are therefore left with considering 
the case $ 0 < o_1 \le o_2 $. 
We can clearly focus on the case 
$ 
  o_1 \le 1 
$ 
and 
$ 
  \mathfrak{N}^{ \IW }( 0 ) \ge 0
$.
Note that one can use the above arguments in order to show that 
\begin{equation}
  E(\IW,[o_1,\ell+1]) 
  \ge 
  \min\bigl( 0, - \frac 12 \mathfrak N^{\IW}( o_1 ) \bigr)
  .
\end{equation}
If $\delta_1<0$ we thus get
\begin{equation}
  E( \IW, [0, \ell + 1 ] ) 
  \ge - \frac 32 \mathfrak{N}^{\IW}(0)
  .
\end{equation}
On the other hand, 
$ 
  E( \IW, [ - \ell - 1, 0 ] )
  \ge ( \ell - 1 ) \mathfrak{N}^{ \IW }( 0 ) 
$ 
so that, 
for $ l \ge 5 / 2 $, 
we have 
$
  E( \IW, [ - \ell - 1, \ell + 1 ] ) \ge 0 
$. 
Conversely, if $ \delta_1 \ge 0 $ we get
\begin{equation}
  E( \IW, [0, \ell + 1 ] ) 
  \ge - \frac 32 \mathfrak{N}^{ \IW }( 0 ) - \delta_1
\end{equation}
and for $ \delta_1 \ge \mathfrak{N}^{ \IW }( 0 ) $ and 
$ \ell \ge 9/2 $ we clearly have 
$
  E( \IW, [ - \ell - 1, \ell + 1 ] ) \ge 0 
$. 
For 
$ 
  \delta_1 \le \mathfrak{N}^{ \IW }( 0 ) 
$ 
we get 
\begin{equation}
  E(\IW,[-1,\ell+1]) \ge -\frac 52 \mathfrak N^{\IW}(0) - \frac 12  \delta_1
  .
\end{equation}
Now for 
$ \delta_1 \le \mathfrak{N}^{ \IW }( 0 ) / ( \ell + 1 ) $ 
we get
\begin{equation}
  E(\IW,[-\ell-1,-1]) \ge \frac 12 \ell \mathfrak N^{\IW}(1) 
  \ge \frac 12 (\ell-1) \mathfrak N^{\IW}(0)
\end{equation}
and for $ \ell \ge 7 $ 
we get that $ E( \IW, [-\ell-1,\ell+1]) \ge 0 $. 
If 
$
  \mathfrak N^{\IW}(0)/(\ell+1) \le \delta_1 \le \mathfrak N^{\IW}(0)
$ 
then
\begin{equation}
  E(\IW, [-\ell-1,-1]) \ge \Bigl( \frac \ell 2 \Bigr)^2 |\delta_1| \ge \frac{\ell-1}{4} \mathfrak N^{\IW}(0) 
  .
\end{equation}
Thus, if $ \ell \ge 13 $ have $ E( \IW, [ - \ell - 1, \ell + 1 ] ) \ge 0 $ 
and the proof of the assertion is finished. 
\end{exa}

\subsection*{Acknowledgements}
This work has been partially funded by the Deutsche Forschungsgemeinschaft (DFG, German Research Foundation) 
under Germany's Excellence Strategy EXC 2044--390685587, Mathematics Münster: Dynamics--Geometry--Structure. 
Moreover, this work been partially funded by the Deutsche Forschungsgemeinschaft (DFG, German Research Foundation) 
-- SFB 1283/2 2021 -- 317210226. Furthermore, this work has been partially funded by the European Union (ERC, MONTECARLO, 101045811). 
The views and the opinions expressed in this work are however those of the authors only and do not necessarily reflect 
those of the European Union or the European Research Council (ERC). Neither the European Union nor the granting authority 
can be held responsible for them.

\bibliographystyle{alpha}
\bibliography{Optimal_networks}

\end{document}